\newenvironment{amssidewaysfigure}{%
  \begin{sidewaysfigure}%
    \vspace*{.5\textwidth}%
    \begin{minipage}{\textheight}%
      \thisfloatpagestyle{empty}%
      \centering%
}{%
    \end{minipage}%
  \end{sidewaysfigure}%
}
\newcommand*{\bbz}[1][B]{\ensuremath{{#1}^{\bullet}_0}\xspace}
\newcommand*{\bcz}[1][B]{\ensuremath{{#1}^{\circ}_0}\xspace}
\newcommand*{\bbt}[1][B]{\ensuremath{{#1}^{\bullet}_2}\xspace}
\newcommand*{\bct}[1][B]{\ensuremath{{#1}^{\circ}_2}\xspace}
\newcommand*{\duoidal}[1][\cat{D}]{\ensuremath{(#1, \circ, \bot, \bullet, 1)}\xspace}
\newcommand*{\duoidalb}[1][\cat{D}]{\ensuremath{(#1, \bullet, 1)}\xspace}
\newcommand*{\duoidalw}[1][\cat{D}]{\ensuremath{(#1, \circ, \bot)}\xspace}
\newtheorem*{rep@theorem}{\rep@title}
\newenvironment{reptheorem}[1]{%
  \def\rep@title{\cref{#1}}%
  \begin{rep@theorem}%
}{%
  \end{rep@theorem}%
}
\begin{document}

\author{Tony Zorman}
\address{TU Dresden, Institut für Geometrie, Zellescher Weg 12--14, 01062 Dresden, Germany}
\email{tony.zorman@tu-dresden.de}

\title{Duoidal R-Matrices}
\maketitle

\begin{abstract}
  In this note, we define an analogue of R-matrices for bialgebras in the setting of a monad that is opmonoidal over two tensor products.
  Analogous to the classical case, such structures bijectively correspond to duoidal structures on the Eilenberg–Moore category of the monad.
  Further, we investigate how a cocommutative version of this lifts the linearly distributive structure of a normal duoidal category.
\end{abstract}

\TOC{}

\section{Introduction}

\noindent Monadic \emph{reconstruction} theory%
---relating additional structure on a monad to structure on its category of algebras---%
has a long tradition.
For example, such results were proved for bimonads\footnote{
  \,Bimonads are called ``Hopf monads'' in~\cite{Moerdijk2002};
  we follow the nomenclature of~\cite{Bruguieres2007,Bruguieres2011} and reserve that term for monads lifting the rigid or closed structure of their base category.
} in~\cite{Moerdijk2002,McCrudden2002},
for Hopf monads in~\cite{Bruguieres2007},
for comodule monads in~\cite{Aguiar2012,halbig24:pivot-hopf},
and for \(*\)-autonomous and linearly distributive monads in~\cite{pastro09:closed,pastro12:note}.

\emph{Duoidal categories} were introduced in~\cite{Aguiar2010} under the name of \emph{2-monoidal categories},
generalising braided monoidal categories
by considering two monoidal structures that are connected by a non-invertible interchange law.
They also generalise 2-fold monoidal categories in the sense of~\cite{balteanu03:iterat},
where the two tensor products are assumed to share a unit.
The terminology used here is due to~\cite[Definition~3]{batanin12:center}.
These structures have been used to study higher-dimensional Hopf theory%
~\cite{boehm13:hopf,booker13:tannak,aguiar18:monad,franco20:duoid,boehm21:masch-hopf},
and have also found applications in
various other fields of mathematics;
see for example~\cite{garner16:commut,shapiro22:duoid-struc-compos-depen,román24:strin-diagr-physic-duoid-categ,torii24:map}.

This note generalises a reconstruction-type result for R-matrices on bimonads,%
~\cite[Proposition~8.5]{Bruguieres2007},
which in turn generalises the classical theory of R-matrices for bialgebras.
The former has the additional advantage of not requiring a braided monoidal base category, as bimonads%
---in contrast with bialgebras---%
may be defined on any monoidal category.

As such, we introduce the notion of an R-matrix for a monad \(T\) on a preduoidal category%
—one equipped with two monoidal structures—%
that is opmonoidal with respect to each one individually.

In \cref{sec:preliminaries} we introduce notation and discuss preliminary results on duoidal categories and bimonads.
\Cref{sec:r-matrices} first discusses a generalisation of cocommutative bialgebras
in the form of the double opmonoidal monads of~\cite[Section~7]{aguiar18:monad},
and then generalises this notion to the non-cocommutative setting
by introducing R-matrices over separately opmonoidal monads on preduoidal categories.
Our main result is:
\begin{reptheorem}{thm:r-matrices-iff-duoidal-structure}
  Let \(\cat{D}\) be a category with monoidal structures \(\circ\) and \(\bullet\),
  and \(T\) a monad on \(\cat{D}\) that has a \(\circ\)-opmonoidal and a \(\bullet\)-opmonoidal structure.
  Then quasi\-triangular structures on \(T\) are in bijective correspondence with duoidal structures on \(\cat{D}^T\).
\end{reptheorem}

\Cref{sec:lin-dist-bimonads} studies the relationship between normal duoidal
and linearly distributive categories from the monadic point of view.
In particular, we see non-planar linearly distributive categories \(\cat{L}\) as an analogue of preduoidal categories,
in the sense that the additional structure trivialises in the monoidal case,
see~\cref{ex:linear-dist-bimonad-in-monoidal-setting}.
Equipping \(\cat{L}\) with a planar structure,
we can relate double comonoidal monads
to linearly distributive monads in the sense of~\cite{pastro12:note}.

\numberwithin{theorem}{section}
\addtocontents{toc}{\SkipTocEntry}
\subsection*{Acknowledgements}\label{sec:acknowledgements}
We thank Ulrich Krähmer for many useful comments and suggestions on a first draft of this article,
and Marcelo Aguiar for clarifying parts of~\cite{aguiar18:monad}.
The author is supported by \textsc{dfg} grant \textsc{kr} \oldstylenums{5036}/\oldstylenums{2}--\oldstylenums{1}.

\section{Preliminaries}\label{sec:preliminaries}

\noindent We refer the reader to~\cite{MacLane1998} and~\cite{Etingof2015} for comprehensive textbook accounts on category theory and monoidal categories.

\begin{definition}\label{def:bimonad}
  A monad \((B, \mu, \eta)\) on a monoidal category \(\cat{C}\) is called a \emph{bimonad}
  if \(B\) is an opmonoidal functor for which \(\mu\) and \(\eta\) are opmonoidal natural transformations.
\end{definition}

For a monad \(T\) on a category \(\cat{C}\),
we denote the \emph{Eilenberg–Moore category} of \(T\),
also called the category of \emph{\(T\)-algebras} or \emph{\(T\)-modules},
by \(\cat{C}^T\).
The following reconstruction result was observed by
Moerdijk~\cite[Theorem~7.1]{Moerdijk2002}
and McCrudden~\cite[Corollary~3.13]{McCrudden2002}.

\begin{proposition}\label{prop: Moerdijk_reconstruction}
  Let \((B, \mu, \eta)\) be a monad on a monoidal category \(\cat{C}\).
  There exists a bijective correspondence between
  bimonad structures on \(B\) and
  monoidal structures on \(\cat{C}^B\)
  such that the canonical forgetful functor \(U^B\from \cat{C}^{B} \to \cat{C}\) is strict monoidal.
\end{proposition}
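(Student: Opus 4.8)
The plan is to write down both directions of the correspondence explicitly and then verify that they are mutually inverse. Unwinding the definition of a strict monoidal functor, a monoidal structure on \(\cat{C}^B\) for which \(U^B\) is strict monoidal amounts to the following data: a \(B\)-algebra structure on \(M \otimes N\) for each pair of \(B\)-algebras \((M,a)\), \((N,b)\); a \(B\)-algebra structure on the unit \(I\) of \(\cat{C}\); functoriality of \(\otimes\) on algebra morphisms; and the requirement that the associator \(\alpha\) and the unitors \(\lambda, \rho\) of \(\cat{C}\) be morphisms of \(B\)-algebras between the relevant composites. One has to match this package with an opmonoidal structure on \(B\), consisting of \(B_2 \colon B(X \otimes Y) \to BX \otimes BY\) natural in \(X, Y\) and \(B_0 \colon BI \to I\), together with the opmonoidality of \(\mu\) and \(\eta\) --- that is, the full content of \cref{def:bimonad}.

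For the direction from bimonads to monoidal structures, given \((B, B_2, B_0)\) I would equip \(M \otimes N\) with the structure map
\[
  B(M \otimes N) \xrightarrow{\,B_2\,} BM \otimes BN \xrightarrow{\,a \,\otimes\, b\,} M \otimes N,
\]
and \(I\) with \(B_0 \colon BI \to I\). That these are \(B\)-algebra structures is a short calculation: the unit axiom uses \(B_2 \circ \eta_{X \otimes Y} = \eta_X \otimes \eta_Y\) and \(B_0 \circ \eta_I = \mathrm{id}_I\) (opmonoidality of \(\eta\)) together with the unit axioms for \(a\) and \(b\), while the associativity axiom uses opmonoidality of \(\mu\) together with naturality of \(B_2\) and associativity of \(a\) and \(b\). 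Functoriality on morphisms is immediate from naturality of \(B_2\), and the fact that \(\alpha\), \(\lambda\), \(\rho\) lift to morphisms of \(\cat{C}^B\) is exactly the content of the opmonoidal coherence axioms --- the associativity hexagon relating \(B_2\) to \(\alpha\), and the two unit triangles relating \(B_2\), \(B_0\) to \(\lambda\), \(\rho\) --- combined with naturality of \(\alpha\), \(\lambda\), \(\rho\). This produces a monoidal structure on \(\cat{C}^B\) with \(U^B\) strict monoidal.

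For the converse, suppose such a monoidal structure on \(\cat{C}^B\) is given, and write \(F^B \dashv U^B\) for the free--forgetful adjunction. By strictness the monoidal unit of \(\cat{C}^B\) has underlying object \(I\), hence is of the form \((I, B_0)\) for a unique \(B_0 \colon BI \to I\). For \(X, Y \in \cat{C}\), the morphism \(\eta_X \otimes \eta_Y \colon X \otimes Y \to BX \otimes BY = U^B(F^B X \otimes F^B Y)\) transposes to a morphism \(F^B(X \otimes Y) \to F^B X \otimes F^B Y\) of \(\cat{C}^B\), and I define \(B_2\) to be its underlying map \(B(X \otimes Y) \to BX \otimes BY\) (again an honest map of that type, by strictness). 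Naturality of \(B_2\), the opmonoidal coherence axioms, and the opmonoidality of \(\mu\) and \(\eta\) then follow from naturality of \(\eta\), the pentagon and triangle axioms of \(\cat{C}^B\), strictness of \(U^B\), and the triangle identities of the adjunction. Alternatively, one may invoke doctrinal adjunction: the left adjoint \(F^B\) of the strict monoidal \(U^B\) carries a canonical opmonoidal structure, whence so does \(B = U^B F^B\), and only the compatibility with \(\mu\) and \(\eta\) then needs separate inspection.

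What remains is to check that the two assignments are mutually inverse. Starting from \((B_2, B_0)\), building the monoidal structure as above, and transposing \(\eta_X \otimes \eta_Y\) back returns \(B_2\) essentially by definition of the adjunction transpose, and likewise for \(B_0\). The reverse composite requires identifying, for arbitrary algebras \((M,a)\) and \((N,b)\), the given tensor structure on \(M \otimes N\) with \((a \otimes b) \circ B_2\); here I would use that the structure map \(a\) is itself a morphism of \(B\)-algebras \(F^B M \to (M,a)\), tensor \(a\) with \(b\), and compare with the transpose defining \(B_2\), exploiting strictness of \(U^B\) so that no coherence isomorphisms intervene. I expect this last step --- pinning down the tensor product on non-free algebras from its values on free ones --- to be the only genuinely delicate point; the rest is a lengthy but routine chase through the opmonoidal functor and bimonad axioms.
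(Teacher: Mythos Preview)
Your proof is correct and follows the standard argument. However, note that the paper does not actually prove this proposition: it is stated as a known result and attributed to Moerdijk~\cite[Theorem~7.1]{Moerdijk2002} and McCrudden~\cite[Corollary~3.13]{McCrudden2002}, with no proof given in the paper itself. Your write-up is precisely the classical construction found in those references, so there is nothing to compare against here beyond saying that you have successfully reconstructed the cited result.
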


The next definition first appeared in~\cite[Definition~6.1]{Aguiar2010}
under the name \emph{2-monoidal category}.
We follow the nomenclature of~\cite[Definition~3]{batanin12:center}
and the notation of~\cite{boehm13:hopf}.

\begin{definition}\label{def:duoidal-category}
  A \emph{duoidal} category is a quintuple \duoidal, consisting of
  \begin{itemize}[label=\raisebox{0.25ex}{\tiny\(\bullet\)}]
    \item monoidal categories \duoidalb{} and \duoidalw;
    \item a not-necessarily invertible natural transformation
    \[
      \zeta_{x,y,a,b} \colon (x \bullet y) \circ (a \bullet b) \nt (x \circ a) \bullet (y \circ b),
    \]
    called the \emph{middle interchange law};
    \item three \emph{structure morphisms}
    \[
      \nu \colon \bot \to \bot \bullet \bot, \qquad \varpi \colon 1 \circ 1 \to 1, \qquad \iota \colon \bot \to 1;
    \]
  \end{itemize}
  such that:
  \begin{itemize}[label=\raisebox{0.25ex}{\tiny\(\bullet\)}]
    \item \((1, \varpi, \iota)\) is a monoid in \duoidalw;
    \item \((\bot, \nu, \iota)\) is a comonoid in \duoidalb;
    \item the following diagrams commute, witnessing \emph{associativity}:
    \begin{equation} \label[diagram]{eq:middle-interchange-assoc1}
      \begin{tikzcd}[ampersand replacement=\&]
	{((x \bullet y) \circ (a \bullet b)) \circ (c \bullet d)} \& {(x \bullet y) \circ ((a \bullet b) \circ (c \bullet d))} \\
	{((x \circ a) \bullet (y \circ b)) \circ (c \bullet d)} \& {(x \bullet y) \circ ((a \circ c) \bullet (b \circ d))} \\
	{((x \circ a) \circ c) \bullet ((y \circ b) \circ d)} \& {(x \circ (a \circ c)) \bullet (y \circ (b \circ d))}
	\arrow["\alpha", from=1-1, to=1-2]
	\arrow["{\id \circ \zeta}", from=1-2, to=2-2]
	\arrow["{\zeta}", from=2-2, to=3-2]
	\arrow["{\zeta \circ \id}"', from=1-1, to=2-1]
	\arrow["{\zeta}"', from=2-1, to=3-1]
	\arrow["{\alpha \bullet \alpha}"', from=3-1, to=3-2]
      \end{tikzcd}
    \end{equation}
    \begin{equation} \label[diagram]{eq:middle-interchange-assoc2}
      \begin{tikzcd}[ampersand replacement=\&]
	{((x \bullet a) \bullet c) \circ ((y \bullet b) \bullet d)} \& {(x \bullet (a \bullet c)) \circ (y \bullet (b \bullet d))} \\
	{((x \bullet a) \circ (y \bullet b)) \bullet (c \circ d)} \& {(x \circ y) \bullet ((a \bullet c) \circ (b \bullet d))} \\
	{((x \circ y) \bullet (a \circ b)) \bullet (c \circ d)} \& {(x \circ y) \bullet ((a \circ b) \bullet (c \circ d))}
	\arrow["{\alpha \circ \alpha}", from=1-1, to=1-2]
	\arrow["{\zeta}", from=1-2, to=2-2]
	\arrow["{\id \bullet \zeta}", from=2-2, to=3-2]
	\arrow["{\zeta}"', from=1-1, to=2-1]
	\arrow["{\zeta \bullet \id}"', from=2-1, to=3-1]
	\arrow["\alpha"', from=3-1, to=3-2]
      \end{tikzcd}
    \end{equation}
    \item the following diagrams commute, witnessing \emph{unitality}:
    \begin{equation}\label[diagram]{eq:duoidal-cat-unitality}
      \scalebox{0.9}{\begin{tikzcd}[ampersand replacement=\&]
        {\bot \circ (a \bullet b)} \& {(\bot \bullet \bot) \circ (a \bullet b)} \& {(a \bullet b) \circ \bot} \& {(a \bullet b) \circ ( \bot \bullet \bot)} \\
        {a \bullet b} \& {(\bot \circ a) \bullet (\bot \circ b)} \& {a \bullet b} \& {(a \circ \bot) \bullet (b \circ \bot)} \\
        {(1 \bullet a) \circ (1 \bullet b)} \& {(1 \circ 1) \bullet (a \circ b)} \& {(a \bullet  1) \circ (b \bullet 1)} \& {(a \circ b) \bullet (1 \circ 1)} \\
        {a \circ b} \& {1 \bullet (a \circ b)} \& {a \circ b} \& {(a \circ b) \bullet 1}
        \arrow["\lambda"', from=4-1, to=4-2]
        \arrow["{\lambda \circ \lambda}"', from=3-1, to=4-1]
        \arrow["{\zeta}", from=3-1, to=3-2]
        \arrow["{\varpi \bullet \id}", from=3-2, to=4-2]
        \arrow["\lambda"', from=4-3, to=4-4]
        \arrow["{\lambda \circ \lambda}"', from=3-3, to=4-3]
        \arrow["{\zeta}", from=3-3, to=3-4]
        \arrow["{\id \bullet \varpi}", from=3-4, to=4-4]
        \arrow["{\nu \circ \id}", from=1-1, to=1-2]
        \arrow["{\zeta}", from=1-2, to=2-2]
        \arrow["\lambda"', from=1-1, to=2-1]
        \arrow["{\lambda^{-1} \bullet \lambda^{-1}}"', from=2-1, to=2-2]
        \arrow["{\id \circ \nu}", from=1-3, to=1-4]
        \arrow["{\zeta}", from=1-4, to=2-4]
        \arrow["\lambda"', from=1-3, to=2-3]
        \arrow["{\lambda^{-1} \bullet \lambda^{-1}}"', from=2-3, to=2-4]
      \end{tikzcd}}
    \end{equation}
  \end{itemize}
\end{definition}

By abuse of notation, we shall often call \(\cat{D}\) a duoidal category,
leaving the rest of the data implicit.

\begin{definition}\label{def:normal-duoidal-category}
  A duoidal category \(\cat{D}\) is called \emph{normal} if \(\bot \cong 1\).
\end{definition}

Note that explicitly requiring the existence of \(\iota \colon \bot \to 1\)
in \Cref{def:duoidal-category} is not strictly necessary,
as it may be derived from the other specified data:
\[
  \iota \from \bot
  \xrightarrow{\;\lambda\;} \bot \circ \bot
  \xrightarrow{\lambda \circ \rho} (1 \bullet \bot) \circ (\bot \bullet 1)
  \xrightarrow{\ \zeta\ } (1 \circ \bot) \bullet (\bot \circ 1)
  \xrightarrow{\lambda \bullet \rho} 1 \bullet 1
  \xrightarrow{\;\lambda\;} 1.
\]

\begin{example}\label{ex:braided-cat-is-duoidal}
  Let \((\cat{C}, \otimes, 1)\) be a braided monoidal category with braiding \(\sigma\).
  By~\cite[Proposition~6.10]{Aguiar2010},
  \((\cat{C}, \otimes, 1, \otimes, 1)\) becomes a duoidal category with structure morphisms
  \[
    \zeta
    \defeq (a \otimes b) \otimes (c \otimes d)
    \cong a \otimes (b \otimes c) \otimes d
    \xrightarrow{a \otimes \sigma_{b,c} \otimes d} a \otimes (c \otimes b) \otimes d
    \cong (a \otimes c) \otimes (b \otimes d),
  \]
  \[
    \varpi \defeq 1 \otimes 1 \xrightarrow{\ \lambda\ } 1, \qquad\qquad
    \nu \defeq 1 \xrightarrow{\ \lambda^{-1}\ } 1 \otimes 1, \qquad\qquad
    \iota \defeq 1 \xrightarrow{\ \id_1\ } 1.\footnote{\,Note in particular that \(\rho_1 = \lambda_1\).}
  \]
\end{example}

\begin{example}\label{ex:strong-duoidal-is-braided}
  The converse of \cref{ex:braided-cat-is-duoidal} also holds;
  if \(\cat{D}\) is a duoidal category,
  such that the interchange law and structure morphisms are isomorphisms,
  then~\cite[Proposition~6.11]{Aguiar2010}
  yields a braiding on \(\duoidalw\) and \(\duoidalb\),
  such that they become isomorphic as braided monoidal categories,
  and the interchange law arises from the braiding.

  Note, however, that there exist non-trivial duoidal structures on a monoidal category \((\cat{C}, \otimes, 1)\).
  For example, the category of (left-left) Yetter–Drinfeld modules over a Hopf algebra \(H\) with non-invertible antipode is \emph{lax braided}%
  —the Yetter–Drinfeld braiding
  \[
    \sigma \defeq {\{\, \sigma_{M,N} \from M \otimes N \to N \otimes M, \quad m \otimes n \mapsto m_{(-1)}n \otimes m_{(0)} \,\}}_{M,N \,\in\, \YDM{H}}
  \]
  is a non-invertible natural transformation that satisfies the braid equations.
  This yields a duoidal structure on \((\cat{C}, \otimes, 1, \otimes, 1)\) that is not braided.
\end{example}

There are various equivalent definitions of duoidal categories.
For example, as pseudomonoids in the monoidal 2-category of monoidal categories,
oplax monoidal functors, and oplax monoidal natural transformations~\cite[Definition~1]{garner16:commut}.
In particular, this means that \(\bullet\) is a lax monoidal and \(\circ\) is an oplax monoidal functor;
\footnote{%
  \,This extends to normal duoidal categories,
  in which \(\circ\from \cat{D} \times \cat{D} \to \cat{D}\) and \(\bot\from \1 \to \cat{D}\)
  are normal oplax monoidal functors,
  where \(\1\) is the terminal category.%
}
from this characterisation, one may obtain a \emph{coherence} result for these structures.

\begin{proposition}[{\cite{lewis72:coher},~\cite[Section~6.2]{Aguiar2010}}]\label{prop:duoidal-coherence}
  Any \textsc{etc} diagram in a duoidal category commutes.
\end{proposition}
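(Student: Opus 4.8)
The plan is to prove this the way coherence theorems are usually proved: via a free construction together with a normal-form argument. First I would form the duoidal category \(\cat{F}\) freely generated by a set of objects large enough to index the object-variables occurring in the diagram at hand; an \textsc{etc} diagram in \(\cat{D}\) is then the image of a parallel pair of \textsc{etc} morphisms in \(\cat{F}\) under the canonical duoidal functor \(\cat{F} \to \cat{D}\) sending the generators to the objects of \(\cat{D}\) appearing in the diagram, so it is enough to show that \(\cat{F}\) has \emph{at most one} \textsc{etc} morphism between any two of its objects. Conceptually this is an instance of coherence for pseudomonoids, applied to \(\cat{D}\) viewed as a pseudomonoid in the 2-category of monoidal categories, oplax monoidal functors and transformations recalled above, together with coherence of that ambient 2-category; unwinding the reduction leaves precisely the combinatorial problem discussed below, which is the route of~\cite[Section~6.2]{Aguiar2010}, building on the coherence results of~\cite{lewis72:coher}.

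In more detail, the objects of \(\cat{F}\) are the formal words built from the generators, \(\bot\) and \(1\) under the binary operations \(\bullet\) and \(\circ\); its morphisms are generated by the associativity and unit isomorphisms of both monoidal structures (and their inverses) together with non-invertible instances of \(\zeta\), \(\nu\), \(\varpi\) and \(\iota\), modulo the relations of \cref{def:duoidal-category}. An \textsc{etc} diagram is one all of whose edges are composites of these generators in which \(\zeta\), \(\nu\), \(\varpi\) and \(\iota\) appear only in the direction displayed in \cref{def:duoidal-category}, so that its commutativity in \(\cat{D}\) amounts to equality of the corresponding parallel \textsc{etc} morphisms of \(\cat{F}\).

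The heart of the argument is a normal-form theorem for \textsc{etc} morphisms of \(\cat{F}\). Roughly, any such morphism should be equal to one in normal form obtained by sliding every occurrence of \(\zeta\) into a single contiguous layer via \cref{eq:middle-interchange-assoc1} and \cref{eq:middle-interchange-assoc2}, reducing everything before and after that layer to canonical form by Mac Lane's coherence theorem~\cite{MacLane1998} for each tensor product separately, and fixing the placement of \(\nu\), \(\varpi\) and \(\iota\) by \cref{eq:duoidal-cat-unitality}; one then checks that parallel \textsc{etc} morphisms acquire the same normal form. I would organise this as a terminating, confluent rewriting system on \textsc{etc} morphisms whose rewrite rules are exactly the moves encoded by those diagrams, so that the associativity and unitality axioms of \cref{def:duoidal-category} serve precisely as the resolutions of its critical pairs; local confluence then holds by inspection, Newman's lemma upgrades it to global confluence, and uniqueness of normal forms follows.

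The hard part, I expect, will be the bookkeeping around the two units \(\bot\) and \(1\) and the maps \(\iota\), \(\nu\), \(\varpi\) relating their iterated \(\bullet\)- and \(\circ\)-powers --- this is why the result is attributed to Lewis rather than being a formal consequence of Mac Lane's theorem alone. Because \(\iota\), and the comparison morphisms derived from it such as the factorisation of \(\iota\) recorded after \cref{def:normal-duoidal-category}, are not invertible, expanding or collapsing a unit is a one-way move, and one must check that the unit diagrams of \cref{eq:duoidal-cat-unitality} suffice to resolve \emph{every} critical pair in which a unit participates, in particular without ever being forced to invert a \(\zeta\). This is also the precise point at which the restriction to \textsc{etc} diagrams is indispensable: re-admitting formal inverses for \(\zeta\), \(\nu\), \(\varpi\) or \(\iota\) would reintroduce the phenomena of braided coherence, and via \cref{ex:strong-duoidal-is-braided} the unrestricted statement would include the false claim that every formal diagram in a braided monoidal category commutes.
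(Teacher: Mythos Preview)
The paper supplies no proof of this proposition; it is cited from \cite{lewis72:coher} and \cite[Section~6.2]{Aguiar2010}, and the paragraph immediately after it explicitly defers to \cite[Definition~5.8 and Theorem~5.9]{malkiewich22:coher} for both a precise definition of an \textsc{etc} diagram and a proof. There is thus nothing in the paper to compare against; your free-construction-plus-normal-form strategy is the standard one those references carry out, and as an outline it is sound.

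There is, however, a misidentification of what the \textsc{etc} hypothesis says and why it is needed. You describe it as a directionality constraint on edges---that \(\zeta\), \(\nu\), \(\varpi\), \(\iota\) occur only in their displayed orientation---and locate the obstruction to unrestricted coherence in the braided phenomena one would recover by inverting them. But in the free duoidal category those morphisms are not invertible to begin with, so that constraint is vacuous there. The paper's definition (following Malkiewich) is instead a constraint on \emph{vertices}: no object of the diagram may be formally isomorphic to either unit. The counterexample the paper points to, a parallel pair \(1 \bullet 1 \rightrightarrows 1\) from \cite{roman23:monoid-con}, involves only correctly oriented structure maps and inverts nothing; coherence fails because a vertex \emph{is} a unit. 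Your rewriting argument therefore has to be organised around that restriction, and the unit bookkeeping you rightly flag as the hard part is exactly where the \textsc{etc} hypothesis does its work---by excluding degenerate objects, not by policing the direction of arrows.
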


Loosely speaking,
an \textsc{etc} diagram is a \emph{formal} diagram \(F\from \cat{J} \to \cat{D}\)
in the sense of~\cite[p.~20]{malkiewich22:coher},
comprising of only structure morphisms of the duoidal category,
such that for all \(j \in \cat{J}\)
the object \(Fj\) is non-isomorphic to any of the two units.
We refer to~\cite[Definition~5.8 and Theorem~5.9]{malkiewich22:coher} for a precise definition and a proof of the result.
A counterexample in the case of a formal diagram with parallel arrows \(1 \bullet 1 \rightrightarrows 1\) is given in~\cite[Proposition~3.1.6 and Example~3.1.7]{roman23:monoid-con}.

Note that%
—the tensor product and unit being normal monoidal functors—%
normal duoidal categories admit an analogue of the well-known coherence result for braided monoidal categories:
any formal diagram comprised only of the structure morphisms in a normal duoidal category commutes, see~\cite[Theorem~5.18]{malkiewich22:coher}.

\section{Quasitriangularity}\label{sec:r-matrices}

\subsection{Double opmonoidal monads}\label{sec:duoidal-bimonads}

\begin{definition}[{\cite[Definition~6.25]{Aguiar2010}}]\label{def:duoidal-bimonoid}
  Suppose that \(\cat{D}\) is a duoidal category.
  A \emph{bimonoid} in \(\cat{D}\) is a quintuple \((B, \mu, \eta, \Delta, \varepsilon)\),
  consisting of a monoid \((B, \mu, \eta)\) in \duoidalw,
  and a comonoid \((B, \Delta, \varepsilon)\) in \duoidalb,
  such that the following diagrams commute:
  \[
    \begin{tikzcd}[ampersand replacement=\&]
      {B \circ B} \& B \& {B \bullet B} \\
      {(B \bullet B) \circ (B \bullet B)} \&\& {(B \circ B) \bullet (B \circ B)}
      \arrow["{\Delta \circ \Delta}"', from=1-1, to=2-1]
      \arrow["{\zeta}"', from=2-1, to=2-3]
      \arrow["{\mu \bullet \mu}"', from=2-3, to=1-3]
      \arrow["\mu", from=1-1, to=1-2]
      \arrow["\Delta", from=1-2, to=1-3]
    \end{tikzcd}
  \]
  \[
    \begin{tikzcd}[ampersand replacement=\&]
      {B \circ B} \& {1 \circ 1} \& \bot \& B \& \bot \\
      B \& 1 \& {\bot \bullet \bot} \& {B \bullet B} \& B \& 1
      \arrow["{\varepsilon \circ \varepsilon}", from=1-1, to=1-2]
      \arrow["{\varpi}", from=1-2, to=2-2]
      \arrow["\mu"', from=1-1, to=2-1]
      \arrow["\varepsilon"', from=2-1, to=2-2]
      \arrow["\eta", from=1-3, to=1-4]
      \arrow["\Delta", from=1-4, to=2-4]
      \arrow["{\nu}"', from=1-3, to=2-3]
      \arrow["{\eta \bullet \eta}"', from=2-3, to=2-4]
      \arrow["{\iota}", from=1-5, to=2-6]
      \arrow["\eta"', from=1-5, to=2-5]
      \arrow["\varepsilon"', from=2-5, to=2-6]
    \end{tikzcd}
  \]
\end{definition}

A reconstruction result for bimonoids
in duoidal categories is proven in~\cite{booker13:tannak}.

\begin{proposition}[\cite{booker13:tannak}]\label{prop:bimonoids-give-rise-to-bimonads}
  For a monoid \(b\) in a duoidal category \duoidal{}
  there is a bijective correspondence between
  bimonoid structures on \(b\),
  and bimonad structures on the monad \(b \circ \blank\) on \(\duoidalb\).
\end{proposition}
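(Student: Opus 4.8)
The plan is to produce two mutually inverse constructions, keeping careful track of which duoidal coherence governs which bimonad or bimonoid axiom.

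\emph{From a bimonoid to a bimonad.}
Let $T = b \circ \blank$ be the monad on $\cat{D}$ induced, through the coherence isomorphisms of $\circ$, by the monoid $(b, \mu, \eta)$ in \duoidalw{}.
Given a comultiplication $\Delta$ and a counit $\varepsilon$ making $b$ a bimonoid in the sense of \cref{def:duoidal-bimonoid}, I would turn $T$ into a bimonad on \duoidalb{} (\cref{def:bimonad}) by means of the $\bullet$-opmonoidal structure
\[
  T_2^{x,y} \from b \circ (x \bullet y) \xrightarrow{\ \Delta \circ \id\ } (b \bullet b) \circ (x \bullet y) \xrightarrow{\ \zeta\ } (b \circ x) \bullet (b \circ y),
  \qquad
  T_0 \from b \circ 1 \xrightarrow{\ \varepsilon \circ \id\ } 1 \circ 1 \xrightarrow{\ \varpi\ } 1 .
\]
In the other direction, from a bimonad structure $(T_2, T_0)$ on $T$ I would extract
\[
  \Delta \from b \cong b \circ \bot \xrightarrow{\ \id \circ \nu\ } b \circ (\bot \bullet \bot) \xrightarrow{\ T_2\ } (b \circ \bot) \bullet (b \circ \bot) \cong b \bullet b,
  \qquad
  \varepsilon \from b \cong b \circ \bot \xrightarrow{\ \id \circ \iota\ } b \circ 1 \xrightarrow{\ T_0\ } 1 .
\]

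\emph{Well-definedness.}
For the first assignment, the coassociativity of $T_2$ as an opmonoidal functor follows from coassociativity of $\Delta$ together with the associativity axioms for $\zeta$ (\cref{eq:middle-interchange-assoc1} and~\cref{eq:middle-interchange-assoc2}); its two unit axioms follow from the counit axioms of $\Delta$ together with the two squares of \cref{eq:duoidal-cat-unitality} that feature $\varpi$, after using naturality of $\zeta$ to commute $\varepsilon$ past it; opmonoidality of $\eta$ reduces, via naturality of $\zeta$ and the two squares of \cref{eq:duoidal-cat-unitality} that feature $\nu$, to the $\eta$–$\nu$ and $\eta$–$\iota$ compatibilities of \cref{def:duoidal-bimonoid}; and opmonoidality of $\mu$ reduces, via naturality and associativity of $\zeta$, to the $\mu$–$\Delta$ and $\mu$–$\varepsilon$ compatibilities. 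For the second assignment one runs these implications in reverse: evaluating each bimonad axiom at the units of $\cat{D}$ and whiskering by $\nu$ and $\iota$, the duoidal coherences named above turn it into the corresponding axiom for the bimonoid $(b, \mu, \eta, \Delta, \varepsilon)$.

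\emph{The bijection, and the main obstacle.}
One of the two round-trips is painless: starting from $(\Delta, \varepsilon)$, building $(T_2, T_0)$, and then re-extracting $(\Delta', \varepsilon')$, bifunctoriality of $\circ$ slides $\Delta$ past $\id \circ \nu$, the square of \cref{eq:duoidal-cat-unitality} that expresses $\zeta \circ (\id \circ \nu)$ through $\circ$-unitors collapses the composite, and naturality of the unitors yields $\Delta' = \Delta$; likewise $\varepsilon' = \varepsilon$, using that $(1, \varpi, \iota)$ is a monoid and the formula for $\iota$ recorded after \cref{def:normal-duoidal-category}. The other round-trip is the heart of the matter: given a bimonad structure $(T_2, T_0)$, one must show that the structure $\bigl(\zeta \circ (\Delta \circ \id),\, \varpi \circ (\varepsilon \circ \id)\bigr)$ built from the extracted $(\Delta, \varepsilon)$ agrees with the original one — equivalently, that a $\bullet$-opmonoidal structure on $b \circ \blank$ is already determined by its component at the $\circ$-unit $\bot$. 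This cannot be deduced from naturality alone, since the two functors $(x,y) \mapsto b \circ (x \bullet y)$ and $(x,y) \mapsto (b \circ x) \bullet (b \circ y)$ are in no way concentrated at $\bot$; it is exactly at this point that the opmonoidality of $\mu$ and $\eta$ becomes indispensable — one whiskers $T_2$ by $\eta$, invokes the monad unit laws, and uses the coherences of \cref{eq:duoidal-cat-unitality} to rewrite $T_2^{x,y}$ in terms of $T_2^{\bot,\bot}$ and the duoidal structure morphisms. I expect this to be the step requiring the most care. It is the categorical counterpart of the classical observation that, over a field $k$ — that is, for $\cat{D} = \cat{Vect}$ with $\circ = \bullet = \otimes$ — the coproduct of a bialgebra $A$ is forced by the bimonad $A \otimes \blank$ via naturality applied to the maps $k \to V$; the argument in the present generality is due to~\cite{booker13:tannak}.
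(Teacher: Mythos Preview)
The paper does not prove this proposition at all: it is stated with attribution to \cite{booker13:tannak} and immediately followed by examples, with no argument given. So there is nothing in the paper to compare your attempt against.

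That said, your sketch is the natural direct approach and is sound in outline. Your identification of the two assignments and the matching of axioms is correct: coassociativity and counitality of \(T_2\) correspond via \cref{eq:middle-interchange-assoc1,eq:middle-interchange-assoc2,eq:duoidal-cat-unitality} to the comonoid axioms for \((\Delta,\varepsilon)\), and opmonoidality of \(\mu\) and \(\eta\) correspond to the four compatibility squares in \cref{def:duoidal-bimonoid}. You are also right that the delicate step is the second round-trip—showing that an arbitrary opmonoidal structure \((T_2,T_0)\) on \(b\circ\blank\) with \(\mu,\eta\) opmonoidal is already determined by its value at \(\bot\)—and that naturality alone is insufficient, since there need not be enough morphisms \(\bot\to x\). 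Your plan to use opmonoidality of \(\mu\) and \(\eta\) together with \cref{eq:duoidal-cat-unitality} is the right one; concretely, one rewrites \(T_2^{x,y}\) by inserting \(\mu\circ T\eta=\id\), applies opmonoidality of \(\mu\) and then of \(\eta\), and uses naturality of \(T_2\) together with the duoidal unitality squares to reduce to \(T_2^{\bot,\bot}\). This step does require some care, as you note. The reference \cite{booker13:tannak} treats the correspondence in a more 2-categorical framework, but the content is the same.
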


\begin{example}\label{ex:bialgebras-gives-rise-to-bimonads}
  A bimonoid in a braided monoidal category \(\cat{C}\)
  is the same as a bimonoid in the duoidal category \(\cat{C}\) from \cref{ex:braided-cat-is-duoidal}.
  In this way one recovers the fact that an object \(b \in \cat{C}\) is a bimonoid
  if and only if the induced monad \(b \otimes \blank\) is a bimonad on \(\cat{C}\).
\end{example}

\begin{example}
  Suppose that \(k\) is a commutative ring
  and \(A\) is a commutative \(k\)-algebra.
  In~\cite[Example~6.18]{Aguiar2010} it is shown that the category of \(A\)-bimodules is duoidal,
  with
  \[
    M \bullet N \defeq M \otimes_A N \defeq \faktor{M \otimes_{k} N}{\langle ma \otimes n - m \otimes an \rangle},
  \]
  and
  \[
    M \circ N \defeq M \otimes_{A \otimes_{k} A} N \defeq \faktor{M \otimes_k N}{\langle amb \otimes n - m \otimes anb \rangle}.
  \]
  Furthermore, from~\cite[Example~6.44]{Aguiar2010} we know that a bimonoid in this duoidal category is an
  \(A\)-bialgebroid in the sense of Ravenel, see~\cite[Definition~A1.1.1]{ravenel86:compl}.
  In this setting,~\cref{prop:bimonoids-give-rise-to-bimonads} recovers a special case
  of~\cite[Theorems~5.1 and~5.4]{szlachanyi03:eilen-moore}.
\end{example}

\begin{definition}[{\cite[Section~7]{aguiar18:monad}}]\label{def:duoidal-bimonad}
  A \emph{double opmonoidal monad} on a duoidal category \(\cat{D}\) consists of
  a monad \((T, \mu, \eta)\) on \(\cat{D}\),
  together with a bimonad structures
  \((T, \bbt, \bbz)\) on \(\duoidalb\)
  and \((T, \bct, \bcz)\) on \(\duoidalw\),
  such that the following diagrams commute:
  \begin{equation}\label[diagram]{eq:pi-nu-morphisms-of-algebras}
    \begin{tikzcd}[ampersand replacement=\&]
      {T(1 \circ 1)} \& T1 \& {T\bot} \& {T(\bot \bullet \bot)} \& {T\bot} \& T1 \\
      {T1 \circ T1} \&\&\& {T\bot \bullet T\bot} \\
      {1 \circ 1} \& 1 \& \bot \& {\bot \bullet\bot} \& \bot \& 1
      \arrow["{{T \varpi}}", from=1-1, to=1-2]
      \arrow["{{T_{2,1,1}^\circ}}"', from=1-1, to=2-1]
      \arrow["{{T_0^\bullet}}", from=1-2, to=3-2]
      \arrow["{T\nu}", from=1-3, to=1-4]
      \arrow["{{T_0^\circ}}"', from=1-3, to=3-3]
      \arrow["{{T_{2,\bot,\bot}^\bullet}}", from=1-4, to=2-4]
      \arrow["{T\iota}", from=1-5, to=1-6]
      \arrow["{T^\circ_0}"', from=1-5, to=3-5]
      \arrow["{T^\bullet_0}", from=1-6, to=3-6]
      \arrow["{{T_0^\bullet \circ T_0^\bullet}}"', from=2-1, to=3-1]
      \arrow["{{T_0^\circ \bullet T_0^\circ}}", from=2-4, to=3-4]
      \arrow["\varpi"', from=3-1, to=3-2]
      \arrow["\nu", from=3-3, to=3-4]
      \arrow["\iota"', from=3-5, to=3-6]
    \end{tikzcd}
  \end{equation}
  \begin{equation}\label[diagram]{eq:cocommutative-duoidal-bimonad}
    \begin{tikzcd}[ampersand replacement=\&]
      {T((a \bullet b) \circ (c \bullet d))} \& {T((a \circ c) \bullet (b \circ d))} \\
      {T(a \bullet b) \circ T(c \bullet d)} \& {T(a \circ c) \bullet T(b \circ d)} \\
      {(Ta \bullet Tb) \circ (Tc \bullet Td)} \& {(Ta \circ Tc) \bullet (Tb \circ Td)}
      \arrow["{T \zeta_{a,b,c,d}}", from=1-1, to=1-2]
      \arrow["{T^\circ_{2,a\bullet b, c\bullet d}}"', from=1-1, to=2-1]
      \arrow["{T^\bullet_{2,a,b} \circ T^\bullet_{2,c,d}}"', from=2-1, to=3-1]
      \arrow["{T^\bullet_{2,a\circ c, b\circ d}}", from=1-2, to=2-2]
      \arrow["{T^\circ_{2,a,c} \bullet T^\circ_{2,b,d}}", from=2-2, to=3-2]
      \arrow["{\zeta_{Ta, Tb, Tc, Td}}"', from=3-1, to=3-2]
    \end{tikzcd}
  \end{equation}
\end{definition}

\begin{example}\label{ex:bimonad-in-braided-cat-is-duoidal-bimonad}
  Let \((\cat{C}, \otimes, 1)\) be a braided monoidal category with braiding \(\sigma\),
  seen as a duoidal category as in \cref{ex:braided-cat-is-duoidal}.
  A bimonad \(B\) on \((\cat{C}, \otimes, 1)\) that additionally satisfies
  the equation \(B_2 \circ B\sigma = \sigma \circ B_2\)
  is a double opmonoidal monad on \((\cat{C}, \otimes, 1, \otimes, 1)\),
  where the two opmonoidal structures are the same,
  and the commutativity of \cref{eq:pi-nu-morphisms-of-algebras} amounts to the fact that
  the monoidal structure morphisms of \(\cat{C}\) lift to the category of \(B\)-algebras;
  see \cref{prop: Moerdijk_reconstruction}.
\end{example}

\begin{example}\label{ex:cocommutative-bimonad}
  For a bialgebra \(B\) in \((\kVect, \otimes, \Bbbk)\),
  the endofunctor
  \(B \otimes \blank\) is a double opmonoidal monad in \((\kVect, \otimes, \Bbbk, \otimes, \Bbbk)\).
  For \(X, Y, Z, W \in \kVect\)
  and \(b \in B\), \(x \in X\), \(y \in Y\), \(z \in Z\), \(w \in W\),
  \cref{eq:cocommutative-duoidal-bimonad} simplifies to
  \[
    b_{(1)} \otimes x \otimes b_{(3)} \otimes z \otimes b_{(2)} \otimes y \otimes b_{(4)} \otimes w
    \,\;=\;
    b_{(1)} \otimes x \otimes b_{(2)} \otimes z \otimes b_{(3)} \otimes y \otimes b_{(4)} \otimes w,
  \]
  which is equivalent to \(b_{(1)} \otimes b_{(2)} = b_{(2)} \otimes b_{(1)}\);
  \ie, \(B\) has to be cocommutative.
\end{example}

As in the case of R-matrices for bialgebras and bimonads,
requiring that the interchange morphism of a duoidal category \(\cat{D}\)
lifts to the category of modules is a strong condition.

\begin{proposition}[{\cite[Theorem~7.2]{aguiar18:monad}}]\label{prop:cocommutative-bimonad-lifts-duoidal-structure}
  Let \(\cat{D}\) be a duoidal category and \(T\from \cat{C}\to \cat{C}\) a monad.
  Then the structure morphisms and interchange law of\, \(\cat{D}\) lift to \(\cat{D}^T\)
  if and only if
  \(T\) is a double opmonoidal monad.

  In particular, if\, \(T\) is a double opmonoidal monad, then \(\cat{D}^T\) is a duoidal category.
\end{proposition}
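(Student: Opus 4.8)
\emph{The plan is to} read the claim off as a dictionary between the clauses of \cref{def:duoidal-bimonad} and the conditions for the individual pieces of duoidal data to lift to \(\cat{D}^T\). I would organise this along the two monoidal structures on the one hand and the connecting data \(\zeta,\nu,\varpi,\iota\) on the other, and close with a faithfulness argument for the axioms.

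First I would handle the two monoidal products. By \cref{prop: Moerdijk_reconstruction}, applied to \duoidalb and to \duoidalw separately, the product \(\bullet\) lifts to \(\cat{D}^T\) with \(U^T\) strict monoidal if and only if \((T,\bbt[T],\bbz[T])\) is a bimonad on \duoidalb, and likewise \(\circ\) lifts if and only if \((T,\bct[T],\bcz[T])\) is a bimonad on \duoidalw. These are exactly the first two clauses of \cref{def:duoidal-bimonad}. I would also record, for later use, that under these lifts the unit of the lifted \(\bullet\) is the algebra \((1,\bbz[T])\), the unit of the lifted \(\circ\) is \((\bot,\bcz[T])\), and that the lifted action on a product \((a,\alpha)\bullet(b,\beta)\) is \((\alpha\bullet\beta)\circ\bbt[T]\), with the analogous formula for \(\circ\).

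Next, with both structures lifted, the remaining requirement is that \(\zeta\) and the three structure morphisms be morphisms of \(T\)-algebras between the appropriate objects. For the interchange law, the source \((a\bullet b)\circ(c\bullet d)\) and target \((a\circ c)\bullet(b\circ d)\) carry the lifted actions computed above, and I would write out the assertion that \(\zeta_{a,b,c,d}\) is an algebra map. Here is the main step: using naturality of \(\zeta\) along the four actions \(\alpha,\beta,\gamma,\delta\), these actions cancel on both sides and the algebra-map condition collapses precisely to the action-free diagram \cref{eq:cocommutative-duoidal-bimonad} evaluated at \(Ta,Tb,Tc,Td\). Conversely, \cref{eq:cocommutative-duoidal-bimonad} together with the same naturality reconstitutes the algebra-map condition for every quadruple of algebras; equivalently, this is the statement that \(\zeta\) is an opmonoidal natural transformation between the two composite opmonoidal functors \((\blank\bullet\blank)\circ(\blank\bullet\blank)\) and \((\blank\circ\blank)\bullet(\blank\circ\blank)\), which lifts to \(\cat{D}^T\) exactly when it is opmonoidal. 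For the structure morphisms the identification is immediate once the unit algebras are recognised: \(\varpi\colon(1,\bbz[T])\circ(1,\bbz[T])\to(1,\bbz[T])\), \(\nu\colon(\bot,\bcz[T])\to(\bot,\bcz[T])\bullet(\bot,\bcz[T])\) and \(\iota\colon(\bot,\bcz[T])\to(1,\bbz[T])\) are morphisms of algebras if and only if the three squares comprising \cref{eq:pi-nu-morphisms-of-algebras} commute. Assembling these equivalences with the first step yields the stated bijection.

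Finally, for the ``in particular'' clause I would verify that the lifted data satisfies the duoidal axioms. Since \(U^T\) is faithful and strict monoidal for both \(\bullet\) and \(\circ\), and sends each lifted structure morphism to its counterpart in \(\cat{D}\), every instance of the associativity and unitality diagrams of \cref{def:duoidal-category} in \(\cat{D}^T\), together with the monoid axioms for \((1,\varpi,\iota)\) and the comonoid axioms for \((\bot,\nu,\iota)\), is carried by \(U^T\) to the corresponding diagram in the duoidal category \(\cat{D}\), which commutes; faithfulness of \(U^T\) then reflects commutativity back to \(\cat{D}^T\), so \(\cat{D}^T\) is duoidal. I expect the only genuinely delicate point to be the naturality cancellation for \(\zeta\) in the middle step; everything else is bookkeeping, matching the arrows of \cref{eq:pi-nu-morphisms-of-algebras} and \cref{eq:cocommutative-duoidal-bimonad} to algebra-morphism conditions.
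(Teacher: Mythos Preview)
The paper does not prove this proposition; it is quoted from \cite[Theorem~7.2]{aguiar18:monad} without argument. Your outline is the standard one and is essentially correct: reduce the two monoidal lifts to \cref{prop: Moerdijk_reconstruction}, identify \cref{eq:pi-nu-morphisms-of-algebras} with the algebra-morphism conditions for \(\varpi,\nu,\iota\), identify \cref{eq:cocommutative-duoidal-bimonad} with the algebra-morphism condition for \(\zeta\), and then reflect the duoidal axioms along the faithful strict-monoidal forgetful functor.

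One point deserves tightening. In the step for \(\zeta\), after using naturality to move the actions past \(\zeta\), the four action maps do not literally ``cancel on both sides'': they become a common prefix \((\alpha\circ\gamma)\bullet(\beta\circ\delta)\) on each side, so that the algebra-morphism condition reads
\[
  \big((\alpha\circ\gamma)\bullet(\beta\circ\delta)\big)\,\Big(\zeta_{Ta,Tb,Tc,Td}\,(T^\bullet_2\circ T^\bullet_2)\,T^\circ_2\Big)
  \;=\;
  \big((\alpha\circ\gamma)\bullet(\beta\circ\delta)\big)\,\Big((T^\circ_2\bullet T^\circ_2)\,T^\bullet_2\,T\zeta_{a,b,c,d}\Big).
\]
This is visibly implied by \cref{eq:cocommutative-duoidal-bimonad}, giving one direction. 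For the converse you cannot cancel that prefix in general; instead specialise to free algebras \((Ta,\mu_a),\dots,(Td,\mu_d)\) and precompose with \(T\big((\eta_a\bullet\eta_b)\circ(\eta_c\bullet\eta_d)\big)\). Naturality of \(T^\circ_2\), \(T^\bullet_2\) and \(\zeta\), together with \(\mu\, T\eta=\id\), strips the prefix and returns exactly \cref{eq:cocommutative-duoidal-bimonad} at \(a,b,c,d\). With this adjustment your argument goes through, and the faithfulness step for the duoidal axioms is fine as written.
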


\subsection{R-matrices}

\noindent Instead of the situation of \cref{prop:cocommutative-bimonad-lifts-duoidal-structure},
we are instead interested in studying which additional structure one can impose on \(T\)
such that \(\cat{D}^T\) becomes duoidal,
where the interchange morphism is instead giving by ``twisting'' that of \(\cat{D}\).
This generalises so-called \emph{R-matrices} for bialgebras and bimonads.

\begin{proposition}[{\cite[Theorem~8.5]{Bruguieres2007}}]\label{prop:R-matrix-monoidal-case}
  Let \(B\) be a bimonad on the monoidal category \(\cat{C}\).
  Then R-matrices on \(B\) are in bijective correspondence with braidings on \(\cat{C}^B\).
\end{proposition}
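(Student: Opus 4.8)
The plan is to set up explicit, mutually inverse assignments between R-matrices on $B$ and braidings on $\cat{C}^B$. By \cref{prop: Moerdijk_reconstruction} the category $\cat{C}^B$ carries a monoidal structure with $(M,r)\otimes(N,s)=\bigl(M\otimes N,\,(r\otimes s)\circ B_{2,M,N}\bigr)$, unit $(1,B_{0})$, and $U^B$ strict monoidal, so that the associativity and unit constraints of $\cat{C}^B$ are literally those of $\cat{C}$. Recall that (in the standard formulation) an R-matrix for $B$ is a natural family $R_{X,Y}\from X\otimes Y\nt BY\otimes BX$ subject to compatibility conditions with $\mu$, $\eta$, and the opmonoidal structure $B_{2},B_{0}$, together with two hexagon-type axioms.

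Given such an $R$, I would define, for $B$-algebras $(M,r)$ and $(N,s)$,
\[
  c_{M,N}\defeq\bigl(M\otimes N\xrightarrow{\ R_{M,N}\ }BN\otimes BM\xrightarrow{\ s\otimes r\ }N\otimes M\bigr),
\]
and check, in this order: (i) $c_{M,N}$ is a morphism of $B$-algebras — this is precisely where compatibility of $R$ with $\mu$ and with $B_{2}$ is used; (ii) $c$ is natural in both variables — immediate from naturality of $R$; (iii) $c$ satisfies the two hexagon identities of $\cat{C}^B$, which, since the coherence constraints are those of $\cat{C}$, unwind exactly to the two hexagon axioms for $R$; (iv) if the intended notion of braiding includes invertibility, this comes from the corresponding clause on $R$ (otherwise the correspondence is with lax braidings). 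Conversely, given a braiding $c$ on $\cat{C}^B$, I would evaluate it on the free algebras $BX=(BX,\mu_X)$ and $BY=(BY,\mu_Y)$; since $U^B$ is strict monoidal, $c_{BX,BY}$ underlies a morphism $BX\otimes BY\to BY\otimes BX$ in $\cat{C}$, and I set
\[
  R_{X,Y}\defeq\bigl(X\otimes Y\xrightarrow{\ \eta_X\otimes\eta_Y\ }BX\otimes BY\xrightarrow{\ c_{BX,BY}\ }BY\otimes BX\bigr).
\]
Naturality of $R$ is naturality of $c$ on free algebras; the $\mu$-, $\eta$-, $B_{2}$- and $B_{0}$-compatibilities follow from $c_{BX,BY}$ being a $B$-algebra morphism together with $\mu,\eta$ being opmonoidal and the monad laws; the hexagon axioms for $R$ come from the hexagons for $c$.

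That the two assignments invert one another is a short computation. From a braiding $c$, the round trip yields $(s\otimes r)\circ c_{BM,BN}\circ(\eta_M\otimes\eta_N)$; naturality of $c$ along the algebra morphisms $r\from(BM,\mu_M)\to(M,r)$ and $s\from(BN,\mu_N)\to(N,s)$ turns this into $c_{M,N}\circ(r\otimes s)\circ(\eta_M\otimes\eta_N)$, which is $c_{M,N}$ because $r\circ\eta_M=\id_M$ and $s\circ\eta_N=\id_N$. From an R-matrix $R$, the round trip yields $(\mu_Y\otimes\mu_X)\circ R_{BX,BY}\circ(\eta_X\otimes\eta_Y)$; naturality of $R$ along $\eta_X$ and $\eta_Y$ rewrites this as $(\mu_Y\otimes\mu_X)\circ(B\eta_Y\otimes B\eta_X)\circ R_{X,Y}$, which is $R_{X,Y}$ by the monad unit law $\mu\circ B\eta=\id$. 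The main obstacle is not this bookkeeping but steps (i)--(iii) and their converses: verifying that ``$c$ is a morphism of $B$-algebras'' matches exactly the monad-compatibility clauses of an R-matrix, and that the hexagon identities for $c$ match exactly, modulo the strict coherence carried by $U^B$, the hexagon axioms for $R$. These are diagram chases of moderate size in which the opmonoidal data $B_{2},B_{0}$ and their compatibility with $\mu,\eta$ must be threaded carefully through each diagram; no conceptual surprise is expected, provided the definition of an R-matrix is arranged so that each clause has a designated counterpart on the braiding side. This is, in essence, the argument of~\cite[Theorem~8.5]{Bruguieres2007}.
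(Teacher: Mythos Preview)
The paper does not supply its own proof of this proposition—it is quoted from \cite[Theorem~8.5]{Bruguieres2007}. Your sketch is correct and is exactly the approach the paper itself adopts when proving the duoidal generalisation in \cref{thm:r-matrices-iff-duoidal-structure}: the interchange law is obtained from $R$ by post-composing with the algebra actions, $R$ is recovered by evaluating the interchange law on free algebras after pre-composing with units, and the mutual-inverse verification is the same naturality-plus-unit-law computation you give.
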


A crucial feature of R-matrices for bimonads%
—see~\cite[Section~8.2]{Bruguieres2007}—%
is that they can be defined on not necessarily braided monoidal categories.
Our definition of duoidal R-matrices incorporates this feature.

\begin{definition}\label{def:preduoidal}
  A category \(\cat{D}\) is called \emph{preduoidal} if it is
  equipped with two monoidal structures \((\circ, \bot)\) and \((\bullet, 1)\).

  A monad \(T\) on a preduoidal category \(\cat{D}\)
  equipped with two bimonad structures over \duoidalw{} and \duoidalb{}
  is called a \emph{separately opmonoidal monad} on \(\cat{D}\).
\end{definition}

\begin{definition}\label{def:r-matrix-preduoidal}
  Let \(\cat{D}\) be a preduoidal category and \(T\) a separately opmonoidal monad on \(\cat{D}\).
  An \emph{R-matrix} on \(T\) consists of a natural transformation
  \[
    R \defeq {\{\, R_{a,b,c,d}\from (a \bullet b) \circ (c \bullet d) \nt (Ta \bullet Tc) \circ (Tb \bullet Td) \,\}}_{a,b,c,d \in \cat{D}},
  \]
  as well as morphisms of \(T\)-algebras
  \[
    \nu \from (\bot, T^{\circ}_0) \to (\bot, T^{\circ}_0) \bullet (\bot, T^{\circ}_0), \quad
    \varpi \from (1, T^{\bullet}_0) \circ (1, T^{\bullet}_0) \to (1, T^{\bullet}_0), \quad
    \iota \from (\bot, T^{\circ}_0) \to (1, T^{\bullet}_0),
  \]
  such that the tuple \((1, \varpi, \iota)\) is a monoid in \((\cat{D}^T, \circ, \bot)\);
  the tuple \((\bot, \nu, \iota)\) is a comonoid in \((\cat{D}^T, \bullet, 1)\);
  and the following diagrams commute for all \(a,b,c,d,x,y \in \cat{D}\):
  \begin{equation}\label[diagram]{eq:r-matrix-unitality1}
    \begin{tikzcd}[ampersand replacement=\&, cramped, column sep=small]
      {\bot \circ (a \bullet b)} \& {(\bot \bullet \bot) \circ (a \bullet b)} \& {(a \bullet b) \circ \bot} \& {(a \bullet b) \circ ( \bot \bullet \bot)} \\
      \& {(T\bot \circ Ta) \bullet (T\bot \circ Tb)} \&\& {(Ta \circ T\bot) \bullet (Tb \circ T\bot)} \\
      {a \bullet b} \& {(\bot \circ a) \bullet (\bot \circ b)} \& {a \bullet b} \& {(a \circ \bot) \bullet (b \circ \bot)}
      \arrow["{{\nu \circ \id}}", from=1-1, to=1-2]
      \arrow["\lambda"', from=1-1, to=3-1]
      \arrow["R", from=1-2, to=2-2]
      \arrow["{{\id \circ \nu}}", from=1-3, to=1-4]
      \arrow["\lambda"', from=1-3, to=3-3]
      \arrow["R", from=1-4, to=2-4]
      \arrow["{(T^\circ_0\circ\alpha)\bullet(T^\circ_0\circ\beta)}", from=2-2, to=3-2]
      \arrow["{(\alpha\circ T^\circ_0)\bullet(\beta\circ T^\circ_0)}", from=2-4, to=3-4]
      \arrow["{{\lambda^{-1} \bullet \lambda^{-1}}}"', from=3-1, to=3-2]
      \arrow["{{\lambda^{-1} \bullet \lambda^{-1}}}"', from=3-3, to=3-4]
    \end{tikzcd}
  \end{equation}
  \begin{equation}\label[diagram]{eq:r-matrix-unitality2}
    \scalebox{0.95}{\begin{tikzcd}[ampersand replacement=\&, cramped, column sep=small]
        {(1 \bullet a) \circ (1 \bullet b)} \& {(T1 \circ T1) \bullet (Ta \circ Tb)} \& {(a \bullet  1) \circ (b \bullet 1)} \& {(Ta \circ Tb) \bullet (T1 \circ T1)} \\
        \& {(1 \circ 1) \bullet (a \circ b)} \&\& {(a \circ b) \bullet (1 \circ 1)} \\
        {a \circ b} \& {1 \bullet (a \circ b)} \& {a \circ b} \& {(a \circ b) \bullet 1}
        \arrow["R", from=1-1, to=1-2]
        \arrow["{{\lambda \circ \lambda}}"', from=1-1, to=3-1]
        \arrow["{(T^\bullet_0\circ T^\bullet_0)\bullet(\alpha\circ\beta)}", from=1-2, to=2-2]
        \arrow["R", from=1-3, to=1-4]
        \arrow["{{\lambda \circ \lambda}}"', from=1-3, to=3-3]
        \arrow["{(\alpha\circ\beta)\bullet(T^\bullet_0\circ T^\bullet_0)}", from=1-4, to=2-4]
        \arrow["{{\varpi \bullet \id}}", from=2-2, to=3-2]
        \arrow["{{\id \bullet \varpi}}", from=2-4, to=3-4]
        \arrow["\lambda"', from=3-1, to=3-2]
        \arrow["\lambda"', from=3-3, to=3-4]
      \end{tikzcd}}
  \end{equation}
  \begin{equation}\label[diagram]{eq:r-matrix-lift}
    \scalebox{0.9}{\begin{tikzcd}[ampersand replacement=\&]
        {T((a \bullet b) \circ (c \bullet d))} \& {T((Ta \circ Tc) \bullet (Tb \circ Td))} \& {T(Ta \circ Tc) \bullet T(Tb \circ Td)} \\
        {T(a \bullet b) \circ T(c \bullet d)} \&\& {(T^2a \circ T^2c) \bullet (T^2b \circ T^2d)} \\
        {(Ta \bullet Tb) \circ (Tc \bullet Td)} \& {(T^2a \circ T^2c) \bullet (T^2b \circ T^2d)} \& {(Ta \circ Tc) \bullet (Tb \circ Td)}
        \arrow["{TR_{a,b,c,d}}", from=1-1, to=1-2]
        \arrow["{T^{\bullet}_{2,Ta \circ Tc, Tb \circ Td}}", from=1-2, to=1-3]
        \arrow["{T^{\circ}_{2,Ta,Tc} \bullet T^{\circ}_{2,Tb,Td}}", from=1-3, to=2-3]
        \arrow["{(\mu_a \circ \mu_c) \bullet (\mu_b \circ \mu_d)}", from=2-3, to=3-3]
        \arrow["{T^{\circ}_{2,a \bullet b, c \bullet d}}"', from=1-1, to=2-1]
        \arrow["{T^{\bullet}_{2,a,b} \circ T^{\bullet}_{2,c, d}}"', from=2-1, to=3-1]
        \arrow["{R_{Ta,Tb,Tc,Td}}"', from=3-1, to=3-2]
        \arrow["{(\mu_a \circ \mu_c) \bullet (\mu_b \circ \mu_d)}"', from=3-2, to=3-3]
      \end{tikzcd}}
  \end{equation}
  \begin{equation}\label[diagram]{eq:r-matrix-1}
    \scalebox{0.87}{\begin{tikzcd}[ampersand replacement=\&]
        {(a \bullet b) \circ (c \bullet d) \circ (x \bullet y)} \&\& {(a \bullet b) \circ ((Tc \circ Tx) \bullet (Td \circ Ty))} \\
        {((Ta \circ Tc) \bullet (T b \circ Td)) \circ (x \bullet y)} \&\& {(Ta \circ T(Tc \circ Tx)) \bullet (Tb \circ T(Td \circ Ty))} \\
        {(T(Ta \circ Tc) \circ Tx) \bullet (T(Tb \circ Td) \circ Ty)} \&\& {(Ta \circ (T^2c \circ T^2x)) \bullet (Tb \circ (T^2d \circ T^2y))} \\
        {((T^2a \circ T^2c) \circ Tx) \bullet ((T^2b \circ T^2d) \circ Ty)} \\
        {((Ta \circ Tc) \circ Tx) \bullet ((Tb \circ Td) \circ Ty)} \&\& {(Ta \circ (Tc \circ Tx)) \bullet (Tb \circ (Td \circ Ty))}
        \arrow["{{\mathrm{id} \circ R_{c,d,x,y}}}", from=1-1, to=1-3]
        \arrow["{{R_{a,b,c,d} \circ \mathrm{id}}}"', from=1-1, to=2-1]
        \arrow["{{R_{a,b,Tc\circ Tx, Td\circ Ty}}}", from=1-3, to=2-3]
        \arrow["{{(Ta \circ T^\circ_{2,Tc,Tx}) \bullet (Tb \circ T^\circ_{2,Td,Ty})}}", from=2-3, to=3-3]
        \arrow["{{(T^\circ_{2,Ta,Tc} \circ Tx)\bullet(T^\circ_{2,Tb,Td} \circ Ty)}}"', from=3-1, to=4-1]
        \arrow["{{R_{Ta\circ Tc, Tb\circ Td, x, y}}}"', from=2-1, to=3-1]
        \arrow["{{(Ta \circ \mu_c \circ \mu_x) \bullet (Tb \circ \mu_d \circ \mu_y)}}", from=3-3, to=5-3]
        \arrow["{{(\mu_a \circ \mu_c \circ Tx)\bullet(\mu_b \circ \mu_d \circ Ty)}}"', from=4-1, to=5-1]
        \arrow["\cong"', from=5-1, to=5-3]
      \end{tikzcd}}
  \end{equation}
  \begin{equation}\label[diagram]{eq:r-matrix-2}
    \scalebox{0.9}{\begin{tikzcd}[ampersand replacement=\&]
        {((x\bullet a)\bullet c)\circ ((y\bullet b)\bullet d)} \& {(x\bullet (a\bullet c))\circ (y\bullet (b\bullet d))} \\
        {(T(x\bullet a)\circ T(y\bullet b))\bullet (Tc\circ Td)} \& {(Tx\circ Ty)\bullet (T(a\bullet c)\circ T(b\bullet d))} \\
        {((Tx\bullet Ta)\circ (Ty\bullet Tb))\bullet (Tc\circ Td)} \& {(Tx\circ Ty)\bullet ((Ta\bullet Tc)\circ (Tb\bullet Td))} \\
        {((T^2x\circ T^2y)\bullet (T^2a\circ T^2b))\bullet (Tc\circ Td)} \& {(Tx\circ Ty)\bullet ((T^2a\circ T^2b)\bullet (T^2c\bullet T^2d))} \\
        {((Tx\circ Ty)\bullet (Ta\circ Tb))\bullet (Tc\circ Td)} \& {(Tx\circ Ty)\bullet ((Ta\circ Tb)\bullet (Tc\bullet Td))}
        \arrow["{R_{x\bullet a,c,y\bullet b,d}}"', from=1-1, to=2-1]
        \arrow["{T^\bullet _{2,x,a}\circ T^\bullet _{2,y,b}\bullet \mathrm{id}}"', from=2-1, to=3-1]
        \arrow["{R_{Tx,Ta,Ty,Tb}\bullet \mathrm{id}}"', from=3-1, to=4-1]
        \arrow[from=4-1, to=5-1]
        \arrow["{(\mu_x\circ \mu_y)\bullet (\mu_a\circ \mu_b)\bullet  \mathrm{id}}"', from=4-1, to=5-1]
        \arrow["\cong", from=1-1, to=1-2]
        \arrow["{R_{x,a\bullet c,y,b\bullet d}}", from=1-2, to=2-2]
        \arrow["{\mathrm{id}\bullet (T^\bullet _{2,a,c}\circ T^\bullet _{2,b,d})}", from=2-2, to=3-2]
        \arrow["{\mathrm{id}\bullet R_{Ta,Tc,Tb,Td}}", from=3-2, to=4-2]
        \arrow["{\mathrm{id}\bullet ((\mu_a\circ \mu_b)\bullet (\mu_c\bullet \mu_d))}", from=4-2, to=5-2]
        \arrow["\cong"', from=5-1, to=5-2]
      \end{tikzcd}}
  \end{equation}

  If \(T\) is equipped with an R-matrix, we say it is \emph{quasitriangular}.
\end{definition}

\begin{example}\label{ex:traditional-r-matrix-to-duoidal-r-matrix}
  Let \((\cat{C}, \otimes, 1)\) be a strict monoidal category,
  and \(B\) a bimonad on \(\cat{C}\).
  Suppose that \(R\) is an R-matrix on \(B\) in the sense of~\cite[Section~8.2]{Bruguieres2007},
  and let
  \[
    S \defeq {\{\, \eta_a \otimes R_{b,c} \otimes \eta_{d} \from a \otimes b \otimes c \otimes d \to Ba \otimes Bc \otimes Bb \otimes Bd \,\}}_{a,b,c,d \in \cat{C}}.
  \]
  Then \(S\), together with \(\nu\), \(\varpi\), and \(\iota\) being the identity,
  is an R-matrix on \(B\), seen as a separately opmonoidal monad on the preduoidal category \(\cat{C}\).

  For example, \cref{eq:r-matrix-unitality1,eq:r-matrix-unitality2}
  commute because \((\beta \otimes \alpha)R_{a,b}\) is a braiding by~\cite[Theorem~8.5]{Bruguieres2007}.
  \Cref{eq:r-matrix-lift} follows by \cref{fig:traditional-r-matrix-to-duoidal-r-matrix:r-matrix-lift},
  where
  \[
    B_{3; x, y, z} \from B(x \otimes y \otimes z) \to Bx \otimes By \otimes Bz
  \]
  denotes the unique natural transformation one obtains by coassociativity of \(B_2\).
  The other diagrams follow in a similar fashion.
  \begin{figure}[htbp]
    \[
      \hspace{-\the\marginparwidth+5em}
      \mathscale{0.85}{
        \begin{tikzcd}[ampersand replacement=\&,cramped]
          {B(a \otimes b \otimes c \otimes d)} \&\&\& {B(Ba \otimes Bc \otimes Bb \otimes Bd)} \\
          {B(a \otimes b) \otimes B(c \otimes d)} \& {Ba\otimes B(b\otimes c)\otimes Bd} \& {B^2a\otimes B(Bc\otimes Bb)\otimes B^2d} \& {B(Ba \otimes Bc) \otimes B(Bb \otimes Bd)} \\
          {Ba \otimes Bb \otimes Bc \otimes Bd} \& {Ba\otimes B(Bc\otimes Bb)\otimes Bd} \& {Ba\otimes B^2c\otimes B^2b\otimes Bd} \& {B^2a \otimes B^2c \otimes B^2b \otimes B^2d} \\
          \&\& {Ba \otimes B^2c \otimes B^2b \otimes Bd} \& {Ba \otimes Bc \otimes Bb \otimes Bd} \\
          {Ba \otimes Bb \otimes Bc \otimes Bd} \& {B^2a \otimes B^2c \otimes B^2b \otimes B^2d} \&\& {Ba \otimes Bc \otimes Bb \otimes Bd}
          \arrow[""{name=0, anchor=center, inner sep=0}, "{{{B(\eta_a\otimes R_{b,c}\otimes \eta_d)}}}", from=1-1, to=1-4]
          \arrow[""{name=1, anchor=center, inner sep=0}, "{{{B_{2,a \otimes b, c \otimes d}}}}"', from=1-1, to=2-1]
          \arrow["{{B_{3,a, b\otimes c, d}}}", from=1-1, to=2-2]
          \arrow["{{B_{3,Ba,Bc\otimes Bb,Bd}}}"', from=1-4, to=2-3]
          \arrow[""{name=2, anchor=center, inner sep=0}, "{{{B_{2,Ba \otimes Bc, Bb \otimes Bd}}}}", from=1-4, to=2-4]
          \arrow["{{{B_{2,a,b} \otimes B_{2,c, d}}}}"', from=2-1, to=3-1]
          \arrow[""{name=3, anchor=center, inner sep=0}, "{{B\eta_a\otimes BR_{b,c}\otimes B\eta_d}}", from=2-2, to=2-3]
          \arrow["{{\id\otimes B_{2,b,c}\otimes \id}}"{description}, from=2-2, to=3-1]
          \arrow["{{\id\otimes BR_{b,c}\otimes \id}}"', from=2-2, to=3-2]
          \arrow[""{name=4, anchor=center, inner sep=0}, "{{\mu_a\otimes \id\otimes \mu_d}}"{description}, from=2-3, to=3-2]
          \arrow[""{name=5, anchor=center, inner sep=0}, "{{\id\otimes B_{2,Bc,Bb}\otimes \id}}"{description}, from=2-3, to=3-4]
          \arrow["{{{B_{2,Ba,Bc} \otimes B_{2,Bb,Bd}}}}", from=2-4, to=3-4]
          \arrow[Rightarrow, no head, from=3-1, to=5-1]
          \arrow["{{\id\otimes B_{2,Bc,Bb}\otimes \id}}"', from=3-2, to=3-3]
          \arrow[""{name=6, anchor=center, inner sep=0}, "{{\id\otimes \mu_c\otimes \mu_b\otimes \id}}"{description}, from=3-3, to=4-4]
          \arrow["{{\mu_a\otimes \id\otimes \mu_d}}", from=3-4, to=3-3]
          \arrow["{{{\mu_a \otimes \mu_c \otimes \mu_b \otimes \mu_d}}}", from=3-4, to=4-4]
          \arrow["{{\mathsf{monad}}}"{description}, shift left=2, draw=none, from=4-3, to=5-2]
          \arrow["{\id \otimes \mu_c \otimes \mu_b \otimes \id}", from=4-3, to=5-4]
          \arrow[Rightarrow, no head, from=4-4, to=5-4]
          \arrow[""{name=7, anchor=center, inner sep=0}, "{{\id\otimes R_{Bb,Bc}\otimes \id}}", from=5-1, to=4-3]
          \arrow["{{{\eta_{Ba}\otimes R_{Bb,Bc}\otimes \eta_{Bd}}}}"', from=5-1, to=5-2]
          \arrow["{{{\mu_a \otimes \mu_c \otimes \mu_b \otimes \mu_d}}}"', from=5-2, to=5-4]
          \arrow["{{\mathsf{coassoc}}}"{description}, draw=none, from=1, to=2-2]
          \arrow["{{\mathsf{nat\ }B_3}}"{description}, draw=none, from=0, to=3]
          \arrow["{{\mathsf{coassoc}}}"{description}, draw=none, from=2, to=2-3]
          \arrow["{{\mathsf{monad}}}"{description}, draw=none, from=3, to=3-2]
          \arrow["\equiv"{description}, draw=none, from=5, to=4]
          \arrow["{{\text{\cite[(57)]{Bruguieres2007}}}}"{description}, draw=none, from=3-2, to=7]
          \arrow["\equiv"{description}, draw=none, from=3-4, to=6]
        \end{tikzcd}
      }
    \]
    \caption{Verification that \(S\) satisfies \cref{eq:r-matrix-lift}.}%
    \label{fig:traditional-r-matrix-to-duoidal-r-matrix:r-matrix-lift}
  \end{figure}
\end{example}

By~\cite[Example~8.4]{Bruguieres2007} we also obtain that
every R-matrix on a \(\Bbbk\)-bialgebra \(B\)
yields an R-matrix on \(B \otimes \blank\) in the sense of \cref{def:r-matrix-preduoidal}.

\begin{remark}\label{rmk:star-invertability}
  Note that the converse of \cref{ex:traditional-r-matrix-to-duoidal-r-matrix} is not necessarily true.
  Let \(\cat{C}\) be a monoidal category seen as a preduoidal category,
  and assume that \(T\) is a separately opmonoidal monad on \(\cat{C}\) where the two opmonoidal structures are the same.
  Then an R-matrix on \(T\) does not necessarily yield an R-matrix in the sense of~\cite[Section~8.2]{Bruguieres2007},
  since we do not require \(R\) to be \(*\)-invertible\footnote{\,%
    A natural transformation \(R\from \otimes \nt T \otimes^{\op} T\) is called \emph{\(*\)-invertible}
    if there exists an ``inverse'' natural transformation \(R^{-1} \from \bblank \otimes \blank \nt T(\blank) \otimes T(\bblank)\),
    such that
    \[
      R^{-1} * R \defeq \blank \otimes \bblank \xrightarrow{\ R\ } T(\bblank) \otimes T(\blank) \xrightarrow{\,R^{-1}\,} TT(\blank) \otimes TT(\bblank) \xrightarrow{\,\mu \otimes \mu\,} T(\blank) \otimes T(\bblank)
    \]
    is equal to \(\eta \otimes \eta\), and similarly for \(R * R^{-1}\).%
  },
  which by~\cite[Theorem~8.5]{Bruguieres2007} corresponds bijectively to the braiding on \(\cat{C}^T\) being invertible.

  By \cref{thm:r-matrices-iff-duoidal-structure} below,
  the R-matrices of \cref{def:r-matrix-preduoidal} correspond to duoidal structures on \(\cat{C}^T\).
  Since the two tensor products on \(\cat{C}^T\) agree,
  by arguments analogous to those in~\cite[Section~6.3]{Aguiar2010},
  this forces the interchange law to come from a lax braiding.
  However, there is no a priori reason for this morphism to be invertible,
  see \cref{ex:strong-duoidal-is-braided}.
\end{remark}

\subsection{From R-matrices to duoidal structures and back}

This section contains our main result,
which can be seen as an analogue of~\cite[Theorem~8.5]{Bruguieres2007},
and a non-cocommutative counterpart to~\cite[Theorem~7.2]{aguiar18:monad}.

\begin{theorem}\label{thm:r-matrices-iff-duoidal-structure}
  Let \(\cat{D}\) be a preduoidal category and suppose that
  \(T\) is a separately opmonoidal monad on \(\cat{D}\).
  For all \(T\)-algebras \((a, \alpha)\), \((b, \beta)\), \((c, \gamma)\), and \((d, \delta)\),
  a quasitriangular structure on \(T\) yields an interchange law
  \[
    \xi \defeq ((\alpha \circ \gamma) \bullet (\beta \circ \delta)) R_{a,b,c,d} \from (a \bullet b) \circ (c \bullet d) \to (a \circ c) \bullet (b \circ d)
  \]
  on \(\cat{C}^T\).
  Conversely, an interchange law \(\xi\) on \(\cat{D}^T\) gives rise to an R-matrix
  \[
    R_{a, b, c, d}\from\!
    (a \bullet b) \!\circ\! (c \bullet d)
    \xrightarrow{(\eta_a \bullet \eta_b) \circ (\eta_c \bullet \eta_d)}
    (Ta \bullet Tb) \!\circ\! (Tc \bullet Td)
    \xrightarrow{\xi_{Ta,Tb,Tc,Td}}
    (Ta \circ Tc) \!\bullet\! (Tb \circ Td)
  \]
  on \(T\).
  These constructions are mutually inverse to each other.
\end{theorem}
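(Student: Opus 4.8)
The plan is to use \cref{prop: Moerdijk_reconstruction} to fix, once and for all, the two monoidal structures $(\cat{D}^T, \circ, \bot)$ and $(\cat{D}^T, \bullet, 1)$ that the two bimonad structures on $T$ induce on $\cat{D}^T$, and to read a ``duoidal structure on $\cat{D}^T$'' as the extra datum of an interchange law and structure morphisms living over these two fixed tensor products. Since the forgetful functor $U^T \from \cat{D}^T \to \cat{D}$ is faithful and strict monoidal for both $\circ$ and $\bullet$, a diagram of $T$-algebra morphisms commutes in $\cat{D}^T$ precisely when its image under $U^T$ commutes in $\cat{D}$; this is the device that turns each axiom of \cref{def:duoidal-category} for $\cat{D}^T$ into an identity in $\cat{D}$ between composites of $R$, the relevant algebra actions, and the coherence constraints of $\circ$ and $\bullet$ — and, run backwards, turns the axioms of \cref{def:r-matrix-preduoidal} into those same identities.

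Given a quasitriangular structure $(R, \nu, \varpi, \iota)$ on $T$, set $\xi$ as in the statement. The first step is to check that each $\xi_{(a,\alpha),(b,\beta),(c,\gamma),(d,\delta)}$ is a morphism of $T$-algebras: writing out the two algebra structures on $(a \bullet b)\circ(c\bullet d)$ and $(a\circ c)\bullet(b\circ d)$ through $T^{\circ}_2$, $T^{\bullet}_2$ and the actions, the required square becomes — after pushing the actions through $R$ by naturality and using the algebra law $\alpha\circ T\alpha = \alpha\circ\mu_a$ — exactly \cref{eq:r-matrix-lift} with both legs precomposed by $(\alpha\circ\gamma)\bullet(\beta\circ\delta)$. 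Naturality of $\xi$ in $\cat{D}^T$ then follows from naturality of $R$ over $\cat{D}$, and the monoid, comonoid, and morphism-of-algebras conditions on $(1,\varpi,\iota)$ and $(\bot,\nu,\iota)$ are verbatim part of the R-matrix data. It remains to verify the associativity squares \cref{eq:middle-interchange-assoc1,eq:middle-interchange-assoc2} and the unitality square \cref{eq:duoidal-cat-unitality} for $(\cat{D}^T, \xi, \dots)$; applying $U^T$, expanding $\xi = ((\alpha\circ\gamma)\bullet(\beta\circ\delta))R$, and again sliding actions through $R$ by naturality together with $\alpha\circ\mu_a = \alpha\circ T\alpha$ and $\alpha\circ\eta_a = \id$, these reduce to \cref{eq:r-matrix-1,eq:r-matrix-2} and to \cref{eq:r-matrix-unitality1,eq:r-matrix-unitality2}. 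Hence $\cat{D}^T$ is duoidal.

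Conversely, given a duoidal structure $(\cat{D}^T, \xi, \nu, \varpi, \iota)$ over the two fixed tensor products, define $R$ as in the statement and keep $\nu, \varpi, \iota$. The same computations, read in reverse, show that \cref{eq:r-matrix-lift} is the unravelled assertion that $\xi$ is an algebra morphism on free algebras (precomposed with units); that \cref{eq:r-matrix-unitality1,eq:r-matrix-unitality2} come from \cref{eq:duoidal-cat-unitality} on free algebras together with $\nu$, $\varpi$ being algebra morphisms; and that \cref{eq:r-matrix-1,eq:r-matrix-2} come from \cref{eq:middle-interchange-assoc1,eq:middle-interchange-assoc2} on free algebras. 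Naturality of $R$ over $\cat{D}$ follows from naturality of $\xi$ and of $\eta$. Finally, the two assignments are mutually inverse. If one starts from $R$, the interchange it induces is $((\mu_a\circ\mu_c)\bullet(\mu_b\circ\mu_d)) R_{Ta,Tb,Tc,Td}$ on the free algebras $(Ta,\mu_a), \dots$, so the R-matrix re-induced from it is $((\mu_a\circ\mu_c)\bullet(\mu_b\circ\mu_d)) R_{Ta,Tb,Tc,Td} ((\eta_a\bullet\eta_b)\circ(\eta_c\bullet\eta_d))$, which naturality of $R$ turns into $((\mu_a\circ\mu_c)\bullet(\mu_b\circ\mu_d))\circ((T\eta_a\circ T\eta_c)\bullet(T\eta_b\circ T\eta_d))\circ R_{a,b,c,d}$, and this collapses to $R_{a,b,c,d}$ by the monad law $\mu\circ T\eta = \id$. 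Starting instead from $\xi$, the interchange re-induced from $R$ on algebras $(a,\alpha), \dots$ is $((\alpha\circ\gamma)\bullet(\beta\circ\delta))\,\xi_{Ta,Tb,Tc,Td}\,((\eta_a\bullet\eta_b)\circ(\eta_c\bullet\eta_d))$, and, since $\alpha, \beta, \gamma, \delta$ are algebra morphisms out of the corresponding free algebras, naturality of $\xi$ rewrites the left-hand factor as $\xi_{(a,\alpha),\dots}\circ((\alpha\bullet\beta)\circ(\gamma\bullet\delta))$, after which $\alpha\circ\eta_a = \id$ leaves $\xi_{(a,\alpha),\dots}$. The structure morphisms are untouched in either direction.

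I expect the main obstacle to be the bookkeeping needed to match the two large associativity diagrams \cref{eq:middle-interchange-assoc1,eq:middle-interchange-assoc2} with the R-matrix axioms \cref{eq:r-matrix-1,eq:r-matrix-2}: both carry iterated tensor products over two interacting monoidal structures, several copies of $T$, and the monad multiplication, so threading naturality of $R$ and the associativity of $T^{\circ}_2$ and $T^{\bullet}_2$ through these chases is where the genuine work lies. The verification of \cref{eq:r-matrix-lift} — the one place where the $\circ$- and $\bullet$-opmonoidal structures of $T$ must be invoked together with $\mu$ — is the other delicate point.
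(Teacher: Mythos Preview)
Your proposal is correct and follows essentially the same approach as the paper: the paper splits the theorem into two propositions establishing the two directions via exactly the diagram chases you describe (reducing the duoidal axioms for $\xi$ to \cref{eq:r-matrix-lift,eq:r-matrix-1,eq:r-matrix-2,eq:r-matrix-unitality1,eq:r-matrix-unitality2} and, conversely, instantiating the duoidal axioms at free algebras), and then verifies mutual invertibility by the same naturality-plus-unit-law computations you give. Your identification of the associativity bookkeeping and the \cref{eq:r-matrix-lift} verification as the two substantive chases matches where the paper's large figures sit.
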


We split up the proof of \cref{thm:r-matrices-iff-duoidal-structure} into two individual results.

\begin{proposition}\label{prop:r-matrices-preduoidal-structure}
  Let \(\cat{D}\) be a preduoidal category and \(T\) a quasitriangular separately opmonoidal monad on \(\cat{D}\)
  with R-matrix \((R, \nu, \varpi, \iota)\).
  Then \(\cat{D}^T\) is a duoidal category,
  with structure morphisms \(\nu\), \(\varpi\), and \(\iota\),
  and interchange law
  \[
    \xi \defeq ((\alpha \circ \gamma) \bullet (\beta \circ \delta)) R_{a,b,c,d} \from (a \bullet b) \circ (c \bullet d) \to (a \circ c) \bullet (b \circ d)
  \]
  for all \((a, \alpha)\), \((b, \beta)\), \((c, \gamma)\), and \((d, \delta) \in \cat{D}^T\).
\end{proposition}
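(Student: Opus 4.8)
The plan is to verify directly that the quadruple $(\cat{D}^T, \circ, \bot, \bullet, 1)$ with the proposed interchange law $\xi$ and structure morphisms $\nu, \varpi, \iota$ satisfies every clause of \cref{def:duoidal-category}, using the R-matrix axioms of \cref{def:r-matrix-preduoidal} as the algebraic input. First I would observe that $(\cat{D}^T, \circ, \bot)$ and $(\cat{D}^T, \bullet, 1)$ are genuine monoidal categories: this is exactly \cref{prop: Moerdijk_reconstruction} applied to each of the two bimonad structures on $T$, since $T$ being separately opmonoidal means it is a bimonad over both $\duoidalw$ and $\duoidalb$. The associator $\alpha$ and unitors $\lambda, \rho$ of each monoidal structure on $\cat{D}^T$ are simply the underlying ones of $\cat{D}$, which is what lets me reuse the diagrams of \cref{def:duoidal-category} with $\alpha$'s and $\lambda$'s referring to the base category. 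The monoid condition on $(1,\varpi,\iota)$ in $(\cat{D}^T,\circ,\bot)$ and the comonoid condition on $(\bot,\nu,\iota)$ in $(\cat{D}^T,\bullet,1)$ are built into the definition of an R-matrix, so those clauses are free.

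The real work is threefold. (1) \emph{$\xi$ is well-defined as a morphism in $\cat{D}^T$}, i.e. $\xi_{(a,\alpha),(b,\beta),(c,\gamma),(d,\delta)}$ is $T$-linear from $(a\bullet b)\circ(c\bullet d)$ — whose $T$-action is $(\alpha\bullet\beta)\circ(\gamma\bullet\delta)$ precomposed with the appropriate opmonoidal structure maps — to $(a\circ c)\bullet(b\circ d)$. This is precisely where \cref{eq:r-matrix-lift} enters: chasing that hexagon, together with the fact that $\alpha,\beta,\gamma,\delta$ are $T$-algebra structures (so $\alpha\circ\mu_a = \alpha\circ T\alpha$, etc.) and naturality of $R$, yields the equality of the two composites expressing $T$-linearity of $((\alpha\circ\gamma)\bullet(\beta\circ\delta))R_{a,b,c,d}$. (2) \emph{$\xi$ is natural in all four algebra arguments}: this follows from naturality of $R$ in $\cat{D}$ together with the fact that morphisms of $T$-algebras are by definition the $T$-equivariant maps, so one just postcomposes/precomposes with the algebra structure maps and uses naturality of $R$. (3) \emph{$\xi$ satisfies the two associativity pentagons \cref{eq:middle-interchange-assoc1,eq:middle-interchange-assoc2} and the unitality diagrams \cref{eq:duoidal-cat-unitality}}. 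The associativity diagrams are exactly what \cref{eq:r-matrix-1,eq:r-matrix-2} were designed to deliver: expanding $\xi = ((\alpha\circ\gamma)\bullet\cdots)R$ in each pentagon and using that $\alpha$ is an algebra map ($\alpha\mu_a=\alpha T\alpha$, absorbing the $\mu$'s that appear in \cref{eq:r-matrix-1,eq:r-matrix-2}) reduces each duoidal pentagon to the corresponding R-matrix hexagon. Similarly, \cref{eq:r-matrix-unitality1,eq:r-matrix-unitality2} are the unitality diagrams \cref{eq:duoidal-cat-unitality} after the same substitution, with the $T^\circ_0, T^\bullet_0$ maps and algebra axioms $\alpha\circ T^\circ_0 = \alpha\circ(\text{unit-related})$ bridging the two forms; here the fact that $\lambda,\rho$ in $\cat{D}^T$ coincide with those in $\cat{D}$ is essential. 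Finally I would note $\iota$ can be recovered from the rest of the data exactly as in the remark following \cref{def:duoidal-category}, so its explicit inclusion is consistent.

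The main obstacle I expect is step (1), the $T$-linearity of $\xi$: the $T$-action on the source object $(a\bullet b)\circ(c\bullet d)$ factors through both opmonoidal structures of $T$ (first $T^\circ_{2}$, then $T^\bullet_2$ on each factor), so unwinding "$\xi$ is a morphism of $T$-algebras" produces a sizeable diagram in which one must interleave \cref{eq:r-matrix-lift}, the four algebra-axiom squares for $\alpha,\beta,\gamma,\delta$, naturality of $R$, and the monad multiplication axiom $\mu\circ T\mu = \mu\circ\mu_T$. This is the analogue of the bookkeeping in \cref{fig:traditional-r-matrix-to-duoidal-r-matrix:r-matrix-lift}, and I would present it as one labelled commuting diagram rather than prose. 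The associativity checks (step 3) are longer but mechanical once the pattern "replace $\mu$ by $T(\text{action})$ via the algebra axiom, then apply the R-matrix hexagon" is established; I would do one of \cref{eq:middle-interchange-assoc1,eq:middle-interchange-assoc2} in detail and indicate that the other is symmetric (swapping the roles of $\circ$ and $\bullet$, using \cref{eq:r-matrix-2} in place of \cref{eq:r-matrix-1}). The converse direction — that starting from an interchange law $\xi$ on $\cat{D}^T$ one recovers an R-matrix, and that the two constructions are mutually inverse — is deferred to the companion result, so it is not part of this proof.
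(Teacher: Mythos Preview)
Your proposal is correct and follows essentially the same route as the paper: the paper likewise uses \cref{eq:r-matrix-lift} (together with naturality of $R$ and the algebra axioms) to show $\xi$ is a morphism of $T$-algebras, derives \cref{eq:middle-interchange-assoc1} from \cref{eq:r-matrix-1} in a single labelled diagram while declaring \cref{eq:middle-interchange-assoc2} analogous, and states that \cref{eq:r-matrix-unitality1,eq:r-matrix-unitality2} immediately give \cref{eq:duoidal-cat-unitality}. Your explicit mention of naturality of $\xi$ and of the Moerdijk lifting for the two monoidal structures is implicit in the paper but worth stating.
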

\begin{proof}
  The claim that \(\xi \in \cat{D}^T((a \bullet b) \circ (c \bullet d), (a \circ c) \bullet (b \circ d))\)
  follows from \Cref{eq:r-matrix-lift}, as seen in \cref{fig:r-matrices-duoidal-structure-1}.
  \begin{figure}[htbp]
    \[
      \begin{tikzcd}[ampersand replacement=\&,cramped]
	{T((a \bullet b) \circ (c \bullet d))} \& {T((Ta \circ Tc) \bullet (Tb \circ Td))} \& {T((a \circ c) \bullet (b \circ d))} \\
	{T(a \bullet b) \circ T(c \bullet d)} \& {T(Ta \circ Tc) \bullet T(Tb \circ Td)} \& {T(a \circ c) \bullet T(b \circ d)} \\
	\& {(T^2a \circ T^2c) \bullet (T^2b \circ T^2d)} \& {(Ta \circ Tc) \bullet (Tb \circ Td)} \\
	\& {(Ta \circ Tc) \bullet (Tb \circ Td)} \\
	{(Ta \bullet Tb) \circ (Tc \bullet Td)} \& {(T^2a \circ T^2c)\bullet(T^2b \circ T^2d)} \\
	\&\& {(a \circ c) \bullet (b \circ d)} \\
	{(a \bullet b) \circ (c \bullet d)} \& {(Ta \circ Tc) \bullet (Tb \circ Td)}
	\arrow[""{name=0, anchor=center, inner sep=0}, "{{TR_{a,b,c,d}}}", from=1-1, to=1-2]
	\arrow["{{T\xi_{a,b,c,d}}}", color=red, curve={height=-30pt}, from=1-1, to=1-3]
	\arrow["{{T^{\circ}_{2,a\bullet b,c\bullet d}}}"', from=1-1, to=2-1]
	\arrow[""{name=1, anchor=center, inner sep=0}, "{{T((\alpha \circ \gamma) \bullet (\beta \circ \delta))}}", from=1-2, to=1-3]
	\arrow["{{T^{\bullet}_{2, Ta \circ Tc, Tb \circ Td}}}"', from=1-2, to=2-2]
	\arrow["{{T^{\bullet}_{2,a \circ c, b \circ d}}}", from=1-3, to=2-3]
	\arrow["{{T^{\bullet}_{2,a,b} \circ T^{\bullet}_{2,c,d}}}"', from=2-1, to=5-1]
	\arrow["{{T^{\circ}_{2,Ta,Tc} \bullet T^{\circ}_{2,Tb,Td}}}"', from=2-2, to=3-2]
	\arrow["{{T^{\circ}_{2,a,c} \bullet T^{\circ}_{2,b,d}}}", from=2-3, to=3-3]
	\arrow[""{name=2, anchor=center, inner sep=0}, "{{(T\alpha \circ T\gamma) \bullet (T\beta \circ T\delta)}}"', from=3-2, to=3-3]
	\arrow["{{(\mu_a \circ \mu_c) \bullet (\mu_b \circ \mu_d)}}"{description}, from=3-2, to=4-2]
	\arrow["{{(\alpha \circ \gamma) \bullet (\beta \circ \delta)}}", from=3-3, to=6-3]
	\arrow[""{name=3, anchor=center, inner sep=0}, "{{(\alpha \circ \gamma)\bullet(\beta \circ \delta)}}", from=4-2, to=6-3]
	\arrow[""{name=4, anchor=center, inner sep=0}, "{{R_{Ta,Tb,Tc,Td}}}"', from=5-1, to=5-2]
	\arrow["{{(\alpha \bullet \beta)\circ(\gamma \bullet \delta)}}"', from=5-1, to=7-1]
	\arrow[""{name=5, anchor=center, inner sep=0}, "{{(\mu_a \bullet \mu_b)\circ(\mu_c \bullet\mu_d)}}", from=5-2, to=4-2]
	\arrow["{{(T\alpha\bullet T\beta)\circ(T\gamma \bullet T\delta)}}"{description}, from=5-2, to=7-2]
	\arrow["{{\xi_{a,b,c,d}}}"', color=red, curve={height=54pt}, from=7-1, to=6-3]
	\arrow[""{name=6, anchor=center, inner sep=0}, "{{R_{a,b,c,d}}}"', from=7-1, to=7-2]
	\arrow[""{name=7, anchor=center, inner sep=0}, "{{(\alpha \circ \gamma)\bullet(\beta \circ \delta)}}", from=7-2, to=6-3]
	\arrow["{\eqref{eq:r-matrix-lift}}"{description}, draw=none, from=0, to=4]
	\arrow["{\mathsf{nat}\ (T_2^\circ \bullet T_2^\circ)T_2^\bullet}"{description, pos=0.3}, shift right=3, draw=none, from=1, to=2]
	\arrow["{{\mathsf{action}}}"{description}, draw=none, from=2, to=3]
	\arrow["{\mathsf{nat}\ R}"{description}, draw=none, from=4, to=6]
	\arrow["{{\mathsf{action}}}"{description}, draw=none, from=5, to=7]
      \end{tikzcd}
    \]
    \caption{Proof that \(\xi\) is a morphism of \(T\)-algebras.}%
    \label{fig:r-matrices-duoidal-structure-1}
  \end{figure}

  \Cref{eq:middle-interchange-assoc1} follows by the commutativity of \cref{fig:r-matrices-duoidal-structure-2},
  where we have left out the respective associators for readability;
  see \cref{prop:duoidal-coherence}.
  The proof of \cref{eq:middle-interchange-assoc2} is analogous.
  \begin{figure}[htbp]
    \[
      \mathscale{0.9}{%
        \hspace{-\the\marginparwidth+2em}
        \begin{tikzcd}[nodes={font=\scriptsize}, column sep=tiny,ampersand replacement=\&]
          {(a \bullet b) \circ (c \bullet d) \circ (x \bullet y)} \& {(a \bullet b) \circ ((Tc \circ Tx) \bullet (Td \circ Ty))} \& {(a \bullet b) \circ ((c \circ x) \bullet (d \circ y))} \& {(Ta \circ T(c \circ x)) \bullet (Tb \circ T(d \circ y))} \\
          {((Ta \circ Tc) \bullet (T b \circ Td)) \circ (x \bullet y)} \& {(Ta \circ T(Tc \circ Tx)) \bullet (Tb \circ T(Td \circ Ty))} \& {(Ta \circ T^2c \circ T^2x) \bullet (Tb \circ T^2d \circ T^2y)} \& {(Ta \circ Tc \circ Tx) \bullet (Tb \circ Td \circ Ty)} \\
          {((a \circ c) \bullet (b \circ d)) \circ (x \bullet y)} \& {(T(Ta \circ Tc) \circ Tx) \bullet (T(Tb \circ Td) \circ Ty)} \\
          \&\& {(Ta \circ Tc \circ Tx) \bullet (Tb \circ Td \circ Ty)} \\
          \& {(T^2a \circ T^2c \circ Tx) \bullet (T^2b \circ T^2d \circ Ty)} \\
          {(T(a \circ c) \circ Tx) \bullet (T(b \circ d) \circ Ty)} \& {(Ta \circ Tc \circ Tx) \bullet (Tb \circ Td \circ Ty)} \&\& {(a \circ c \circ x) \bullet (b \circ d \circ y)}
          \arrow["{{\mathrm{id} \circ R_{c,d,x,y}}}", from=1-1, to=1-2]
          \arrow[""{name=0, anchor=center, inner sep=0}, "{{\id \circ ((\gamma \circ \chi) \bullet (\delta \circ \omega))}}", from=1-2, to=1-3]
          \arrow["{{R_{a,b,c\circ x, d\circ y}}}", from=1-3, to=1-4]
          \arrow["{{(Ta \circ T^\circ_{2,c,x}) \bullet (Tb \circ T^\circ_{2,d,y})}}", from=1-4, to=2-4]
          \arrow[""{name=1, anchor=center, inner sep=0}, "{{(\alpha \circ \gamma \circ \chi) \bullet (\beta \circ \delta \circ \omega)}}", from=2-4, to=6-4]
          \arrow["{{R_{a,b,c,d} \circ \mathrm{id}}}"', from=1-1, to=2-1]
          \arrow["{{((\alpha \circ \gamma) \bullet (\beta \circ \delta)) \circ \id}}"', from=2-1, to=3-1]
          \arrow["{{R_{a\circ c, b \circ d, x, y}}}"', from=3-1, to=6-1]
          \arrow[""{name=2, anchor=center, inner sep=0}, "{\raisebox{-0.8em}{\((T^\circ_{2,a,c} \circ Tx) \bullet (T^\circ_{2,b,d} \circ Ty)\)}}"', from=6-1, to=6-2]
          \arrow[""{name=3, anchor=center, inner sep=0}, "{{(\alpha \circ \gamma \circ \chi) \bullet (\beta \circ \delta \circ \omega)}}"', from=6-2, to=6-4]
          \arrow["{{R_{a,b,Tc\circ Tx, Td\circ Ty}}}"', from=1-2, to=2-2]
          \arrow[""{name=4, anchor=center, inner sep=0}, "{\raisebox{-0.8em}{\((Ta \circ T^\circ_{2,Tc,Tx}) \bullet (Tb \circ T^\circ_{2,Td,Ty})\)}}"', from=2-2, to=2-3]
          \arrow[""{name=5, anchor=center, inner sep=0}, "{\raisebox{-0.8em}{\((Ta \circ T\alpha \circ T\xi) \bullet (Tb \circ T\nu \circ T\omega)\)}}"', from=2-3, to=2-4]
          \arrow["{{(T^\circ_{2,Ta,Tc} \circ Tx)\bullet(T^\circ_{2,Tb,Td} \circ Ty)}}"{description}, from=3-2, to=5-2]
          \arrow[""{name=6, anchor=center, inner sep=0}, "{{R_{Ta\circ Tc, Tb\circ Td, x, y}}}"{description}, from=2-1, to=3-2]
          \arrow["{{(T\alpha \circ T\gamma \circ Tx)\bullet(T\beta \circ T\delta \circ Ty)}}"', from=5-2, to=6-2]
          \arrow["{{(Ta \circ \mu_c \circ \mu_x) \bullet (Tb \circ \mu_d \circ \mu_y)}}"{description}, from=2-3, to=4-3]
          \arrow["{{(\alpha \circ \gamma \circ \chi) \bullet (\beta \circ \delta \circ \omega)}}"{description}, from=4-3, to=6-4]
          \arrow[""{name=7, anchor=center, inner sep=0}, "{{(\mu_a \circ \mu_c \circ Tx)\bullet(\mu_b \circ \mu_d \circ Ty)}}"{description}, from=5-2, to=4-3]
          \arrow["{{\mathsf{action}}}"{description}, draw=none, from=2-3, to=1]
          \arrow["{{\mathsf{nat}}}"{description}, draw=none, from=0, to=5]
          \arrow["{{\mathsf{action}}}"{description}, draw=none, from=7, to=3]
          \arrow["{{\mathsf{nat}}}"{description}, draw=none, from=6, to=2]
          \arrow["{\eqref{eq:r-matrix-1}}"{description, pos=0.6}, draw=none, from=4, to=7]
        \end{tikzcd}
      }
    \]
    \caption{Proof that \(\xi\) satisfies \cref{eq:middle-interchange-assoc1}.}%
    \label{fig:r-matrices-duoidal-structure-2}
  \end{figure}

  \Cref{eq:r-matrix-unitality1,eq:r-matrix-unitality2} immediately imply \cref{eq:duoidal-cat-unitality}.
\end{proof}

\begin{proposition}\label{prop:duoidal-structure-r-matrix}
  Let \(\cat{D}\) be a preduoidal category,
  \(T\) a separately opmonoidal monad on \(\cat{D}\),
  and suppose that \(\cat{D}^T\) is a duoidal category with interchange law
  \[
    \xi_{a,b,c,d}\from (a \bullet b) \circ (c \bullet d) \to (a \circ c) \bullet (b \circ d).
  \]
  Then the structure morphisms of\, \(\cat{D}^T\), together with
  \[
    R_{a, b, c, d}\from\!
    (a \bullet b) \!\circ\! (c \bullet d)
    \xrightarrow{(\eta_a \bullet \eta_b) \circ (\eta_c \bullet \eta_d)}
    (Ta \bullet Tb) \!\circ\! (Tc \bullet Td)
    \xrightarrow{\xi_{Ta,Tb,Tc,Td}}
    (Ta \circ Tc) \!\bullet\! (Tb \circ Td)
  \]
  yield an R-matrix for \(T\).
\end{proposition}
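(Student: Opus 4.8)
The plan is to verify that $R$, together with the structure morphisms $\nu$, $\varpi$, $\iota$ of the given duoidal category $\cat{D}^T$, satisfies all the axioms of \cref{def:r-matrix-preduoidal}, and then to check that the passage $\xi \mapsto R$ is inverse to the passage $R \mapsto \xi$ of \cref{prop:r-matrices-preduoidal-structure}. The enabling fact throughout is that, since $T$ is separately opmonoidal, \cref{prop: Moerdijk_reconstruction} makes $\cat{D}^T$ monoidal for both $\bullet$ and $\circ$ with the forgetful functor $U^T$ strict monoidal each way; in particular the $\bullet$- and $\circ$-tensor products of free algebras carry the algebra structures induced by $T^{\bullet}_2$, $T^{\circ}_2$ (and by $T^{\bullet}_0$, $T^{\circ}_0$ on the units), and $\xi$, being a natural transformation of functors into $\cat{D}^T$, is a family of $T$-algebra morphisms which may be evaluated at free algebras and whose components may be read off in $\cat{D}$ through $U^T$.

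First I would dispatch the cheap parts: naturality of $R$ is immediate from naturality of $\xi$ and $\eta$, and the requirements that $\nu$, $\varpi$, $\iota$ be $T$-algebra morphisms of the prescribed types and that $(1,\varpi,\iota)$ be a monoid in $(\cat{D}^T,\circ,\bot)$ and $(\bot,\nu,\iota)$ a comonoid in $(\cat{D}^T,\bullet,1)$ are simply part of the given duoidal structure. For the substantial diagrams, I would specialise the coherence diagrams of $\cat{D}^T$ to free algebras and pre-/post-compose with units and actions: \cref{eq:r-matrix-lift} is precisely the statement that $\xi$ is a morphism of $T$-algebras evaluated at $(Ta,\mu_a),\dots,(Td,\mu_d)$, once one unfolds the induced algebra structures on the two tensor products and precomposes with $(\eta_a\bullet\eta_b)\circ(\eta_c\bullet\eta_d)$, using naturality of $T^{\bullet}_2$, $T^{\circ}_2$ and the monad axioms; \cref{eq:r-matrix-unitality1,eq:r-matrix-unitality2} come from the unitality diagram \cref{eq:duoidal-cat-unitality} of $\cat{D}^T$ evaluated at the relevant free algebras, with the action maps moved through by naturality of $\xi$; and \cref{eq:r-matrix-1,eq:r-matrix-2} come from the associativity diagrams \cref{eq:middle-interchange-assoc1,eq:middle-interchange-assoc2} of $\cat{D}^T$, again evaluated at free algebras, after repeatedly rewriting by naturality of $\xi$ and of the opmonoidal structure maps and collapsing iterated applications of $T$ via $\mu \circ T\mu = \mu \circ \mu T$. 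The associators may be suppressed throughout by \cref{prop:duoidal-coherence}, as in the proof of \cref{prop:r-matrices-preduoidal-structure}.

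For the mutual-inverse claim I would argue directly. Given an R-matrix $(R,\nu,\varpi,\iota)$, forming $\xi$ and then $R$ again gives, at objects $a,b,c,d$, the composite $((\mu_a\circ\mu_c)\bullet(\mu_b\circ\mu_d)) \circ R_{Ta,Tb,Tc,Td} \circ ((\eta_a\bullet\eta_b)\circ(\eta_c\bullet\eta_d))$; sliding the units past $R$ by naturality and then cancelling $\mu\circ T\eta=\id$ recovers $R_{a,b,c,d}$. Conversely, given a duoidal structure with interchange law $\xi$, forming $R$ and then $\xi$ again gives, at algebras $(a,\alpha),\dots,(d,\delta)$, the composite $((\alpha\circ\gamma)\bullet(\beta\circ\delta)) \circ \xi_{Ta,Tb,Tc,Td} \circ ((\eta_a\bullet\eta_b)\circ(\eta_c\bullet\eta_d))$; since each action is a $T$-algebra morphism $(Ta,\mu_a)\to(a,\alpha)$, naturality of $\xi$ rewrites the first two factors as $\xi_{a,b,c,d}\circ((\alpha\bullet\beta)\circ(\gamma\bullet\delta))$, and then the algebra unit law $\alpha\circ\eta_a=\id$ returns $\xi_{a,b,c,d}$. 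In both round-trips the structure morphisms $\nu$, $\varpi$, $\iota$ are manifestly fixed.

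I expect the main obstacle to be \cref{eq:r-matrix-1,eq:r-matrix-2}: the duoidal associativity of $\cat{D}^T$ is stated for arbitrary algebras, so extracting the R-matrix hexagons forces one to pull the monad multiplications out of the induced algebra structures on the many iterated tensor products and then reshuffle the diagram by naturality of $\xi$; keeping track of exactly which copies of $T$ are absorbed by $\mu$, and in which order the associators and interchange laws act, is the genuinely delicate bookkeeping — a large pasting diagram, in the spirit of \cref{fig:r-matrices-duoidal-structure-2}, will be needed for each.
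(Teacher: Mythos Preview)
Your proposal is correct and follows essentially the same route as the paper: each R-matrix axiom is obtained from the corresponding duoidal axiom of \(\cat{D}^T\) evaluated at free algebras, using naturality of \(\xi\), the opmonoidal structure maps, and the monad laws, exactly as the paper does in its pasting diagrams. The one structural difference is that you fold the mutual-inverse argument into this proposition, whereas the paper isolates it in the proof of \cref{thm:r-matrices-iff-duoidal-structure}; the computations you give for the two round-trips are precisely those the paper carries out there.
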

\begin{proof}
  First, let us verify that \(R\) satisfies \cref{eq:r-matrix-lift}.
  Let \(a,b,c,d \in \cat{D}^T\);
  then the claim follows by the commutativity of \cref{fig:duoidal-structure-r-matrix:r-matrix-lift}.
  \begin{figure}[htbp]
    \[
      \mathscale{0.93}{\begin{tikzcd}[ampersand replacement=\&]
          {T((a \bullet b) \circ (c \bullet d))} \& {T((Ta \bullet Tb) \circ (Tc \bullet Td))} \& {T((Ta\circ Tc)\bullet(Tb\circ Td))} \\
          {T(a\bullet b)\circ T(c\bullet d)} \& {T(Ta\bullet Tb)\circ T(Tc\bullet Td)} \& {T(Ta\circ Tc)\bullet T(Tb\circ Td)} \\
          {(Ta \bullet Tb)\circ(Tc \bullet Td)} \& {(T^2a\bullet T^2b)\circ(T^2c\bullet T^2d)} \& {(T^2a\circ T^2c) \bullet (T^2b \circ T^2d)} \\
          \& {(Ta\bullet Tb)\circ(Tc\bullet Td)} \\
          \& {(Ta\circ Tc) \bullet (Tb \circ Td)} \\
          {(T^2a\bullet T^2b)\circ(T^2c\bullet T^2d)} \& {(T^2a\circ T^2c) \bullet (T^2b \circ T^2d)} \& {(Ta\circ Tc) \bullet (Tb \circ Td)}
          \arrow[""{name=0, anchor=center, inner sep=0}, "{\raisebox{0.3em}{\(T((\eta_{a}\bullet\eta_{b})\circ(\eta_{c}\bullet\eta_{d}))\)}}", from=1-1, to=1-2]
          \arrow["{{{T^\circ_{2;a\bullet b,c\bullet d}}}}"', from=1-1, to=2-1]
          \arrow["{\raisebox{0.3em}{\(T\xi_{Ta,Tb,Tc,Td}\)}}", from=1-2, to=1-3]
          \arrow["{{{T^\circ_{2;Ta\bullet Tb, Tc\bullet Td}}}}", from=1-2, to=2-2]
          \arrow["{{{T^\bullet_{2;Ta\circ Tc, Tb\circ Td}}}}", from=1-3, to=2-3]
          \arrow["{{{T^\bullet_{2;a,b}\circ T^\bullet_{2;c,d}}}}"', from=2-1, to=3-1]
          \arrow["{{{T^\bullet_{2;Ta,Tb}\circ T^\bullet_{2;Tc,Td}}}}", from=2-2, to=3-2]
          \arrow["{{{T^\circ_{2;Ta,Tc} \bullet T^\circ_{2;Tb,Td}}}}", from=2-3, to=3-3]
          \arrow[""{name=1, anchor=center, inner sep=0}, "{\raisebox{0.5em}{\((T\eta_{a}\bullet T\eta_{b})\circ(T\eta_{c}\bullet T\eta_{d})\)}}", from=3-1, to=3-2]
          \arrow[""{name=2, anchor=center, inner sep=0}, Rightarrow, no head, from=3-1, to=4-2]
          \arrow[""{name=3, anchor=center, inner sep=0}, "{{{(\eta_{Ta}\bullet\eta_{Tb})\circ(\eta_{Tc}\bullet\eta_{Td})}}}"', from=3-1, to=6-1]
          \arrow["{{(\mu_{a}\bullet\mu_{b})\circ(\mu_{c}\bullet\mu_{d})}}", from=3-2, to=4-2]
          \arrow[""{name=4, anchor=center, inner sep=0}, "{{{(\mu_{a}\circ\mu_{c})\bullet(\mu_{b}\circ\mu_{d})}}}", from=3-3, to=6-3]
          \arrow[""{name=5, anchor=center, inner sep=0}, "{{{\xi_{Ta,Tb,Tc,Td}}}}"', from=4-2, to=5-2]
          \arrow["{{{(\eta_{Ta}\circ\eta_{Tc})\bullet(\eta_{Tb}\circ\eta_{Td})}}}"', from=5-2, to=6-2]
          \arrow[""{name=6, anchor=center, inner sep=0}, Rightarrow, no head, from=5-2, to=6-3]
          \arrow["{\raisebox{-0.8em}{\(\xi_{T^2a,T^2b,T^2c,T^2d}\)}}"', from=6-1, to=6-2]
          \arrow["{\raisebox{-0.8em}{\((\mu_{a}\circ\mu_{c})\bullet(\mu_{b}\circ\mu_{d})\)}}"', from=6-2, to=6-3]
          \arrow["{\mathsf{nat}\ (T^\bullet_2\circ T^\bullet_2)T^\circ_2}"{description, pos=0.3}, draw=none, from=0, to=1]
          \arrow["{{\mathsf{nat\ }\xi}}"', draw=none, from=3, to=5]
          \arrow["{{T\mathsf{\ monad}}}"{description}, draw=none, from=3-2, to=2]
          \arrow["{{\xi\mathsf{\ morphism\ of\ (free)\ }T\text{-}\mathsf{algebras}}}"{description}, draw=none, from=5, to=4]
          \arrow["{{T\mathsf{\ monad}}}"{description}, draw=none, from=6, to=6-2]
        \end{tikzcd}}
    \]
    \caption{The map \(R\) satisfies \cref{eq:r-matrix-lift}.}%
    \label{fig:duoidal-structure-r-matrix:r-matrix-lift}
  \end{figure}

  The fact that \cref{eq:r-matrix-1} holds is due to \cref{fig:duoidal-structure-r-matrix-2},
  and \cref{eq:r-matrix-2} is similar.
  \begin{amssidewaysfigure}
    \vspace{1cm}
    \[
      \mathscale{0.9}{\hspace{-2cm}\begin{tikzcd}[ampersand replacement=\&]
          {(a \bullet b) \circ (c \bullet d) \circ (x \bullet y)} \& {(a\bullet b)\circ(Tc\bullet Td)\circ(Tx\bullet Ty)} \& {(a \bullet b) \circ ((Tc \circ Tx) \bullet (Td \circ Ty))} \& {(Ta\bullet Tb)\circ(T(Tc \circ Tx)\bullet T(Td\circ Ty))} \\
          {(Ta\bullet Tb)\circ (Tc \bullet Td)\circ (x\bullet y)} \\
          {((Ta \circ Tc) \bullet (T b \circ Td)) \circ (x \bullet y)} \& {(Ta\bullet Tb)\circ(Tc\bullet Td)\circ(Tx\bullet Ty)} \& {(Ta\bullet Tb)\circ((Tc \circ Tx)\bullet (Td\circ Ty))} \\
          \& {((Ta \circ Tc) \bullet (T b \circ Td)) \circ (Tx \bullet Ty)} \&\& {(Ta \circ T(Tc \circ Tx)) \bullet (Tb \circ T(Td \circ Ty))} \\
          {(T(Ta \circ Tc) \bullet T(T b \circ Td)) \circ (Tx \bullet Ty)} \\
          {(T(Ta \circ Tc) \circ Tx) \bullet (T(Tb \circ Td) \circ Ty)} \& {(Ta \circ Tc \circ Tx) \bullet (Tb \circ Td \circ Ty)} \& {(Ta \circ Tc \circ Tx) \bullet (Tb \circ Td \circ Ty)} \& {(Ta \circ T^2c \circ T^2x) \bullet (Tb \circ T^2d \circ T^2y)} \\
          \\
          {(T^2a \circ T^2c \circ Tx) \bullet (T^2b \circ T^2d \circ Ty)} \&\&\& {(Ta \circ Tc \circ Tx) \bullet (Tb \circ Td \circ Ty)}
          \arrow[""{name=0, anchor=center, inner sep=0}, "{\mathrm{id}\circ(\eta_c\bullet\eta_d)\circ(\eta_x\bullet\eta_y)}", from=1-1, to=1-2]
          \arrow["{(\eta_a\bullet\eta_b)\circ(\eta_c\bullet\eta_d)\circ\mathrm{id}}"', from=1-1, to=2-1]
          \arrow[""{name=1, anchor=center, inner sep=0}, "{\mathrm{id}\circ\xi_{Tc,Td,Tx,Ty}}", from=1-2, to=1-3]
          \arrow["{(\eta_a\bullet\eta_b)\circ\mathrm{id}}", from=1-2, to=3-2]
          \arrow[""{name=2, anchor=center, inner sep=0}, "{\raisebox{0.5em}{\((\eta_a\bullet\eta_b)\circ(\eta_{Tc\circ Tx} \bullet \eta_{Td \circ Ty})\)}}", from=1-3, to=1-4]
          \arrow["{(\eta_a\bullet\eta_b)\circ\mathrm{id}}"', from=1-3, to=3-3]
          \arrow["{\xi_{Ta,Tb,T(Tc\circ Tx), T(Td\circ Ty)}}", from=1-4, to=4-4]
          \arrow["{\xi_{Ta,Tb,Tc,Td}\circ \mathrm{id}}"', from=2-1, to=3-1]
          \arrow[""{name=3, anchor=center, inner sep=0}, "{\mathrm{id}\circ(\eta_x\bullet\eta_y)}", from=2-1, to=3-2]
          \arrow[""{name=4, anchor=center, inner sep=0}, "{\mathrm{id}\circ(\eta_x\bullet\eta_y)}"', from=3-1, to=4-2]
          \arrow["{(\eta_{Ta \circ Tc} \bullet \eta_{T b \circ Td}) \circ (\eta_x \bullet \eta_y)}"', from=3-1, to=5-1]
          \arrow[""{name=5, anchor=center, inner sep=0}, "{\mathrm{id}\circ\xi_{Tc,Td,Tx,Ty}}"', from=3-2, to=3-3]
          \arrow["{\xi_{Ta,Tb,Tc,Td}\circ \mathrm{id}}", from=3-2, to=4-2]
          \arrow["{\mathrm{id}\circ(\eta_{Tc\circ Tx} \bullet \eta_{Td \circ Ty})}"', from=3-3, to=1-4]
          \arrow["\eqref{eq:middle-interchange-assoc1}"{description}, draw=none, from=3-3, to=6-2]
          \arrow["{\xi_{Ta,Tb,Tc\circ Tx, Td\circ Ty}}"', from=3-3, to=6-3]
          \arrow["{(\eta_{Ta \circ Tc} \bullet \eta_{T b \circ Td}) \circ \mathrm{id}}", from=4-2, to=5-1]
          \arrow["{\xi_{Ta \circ Tc, Tb \circ Td, Tx, Ty}}", from=4-2, to=6-2]
          \arrow["{{{(Ta \circ T^\circ_{2,Tc,Tx}) \bullet (Tb \circ T^\circ_{2,Td,Ty})}}}", from=4-4, to=6-4]
          \arrow["{\xi_{T(Ta \circ Tc), T(T b \circ Td), Tx, Ty}}"', from=5-1, to=6-1]
          \arrow[""{name=6, anchor=center, inner sep=0}, "{{{(T^\circ_{2,Ta,Tc} \circ Tx)\bullet(T^\circ_{2,Tb,Td} \circ Ty)}}}"', from=6-1, to=8-1]
          \arrow[""{name=7, anchor=center, inner sep=0}, "{\raisebox{-0.8em}{\((\eta_{Ta \circ Tc} \circ \mathrm{id}) \bullet (\eta_{T b \circ Td} \circ \mathrm{id})\)}}", from=6-2, to=6-1]
          \arrow[""{name=8, anchor=center, inner sep=0}, "{(\mathrm{id}\circ\eta_{Tc\circ Tx}) \bullet (\mathrm{id}\circ\eta_{Td \circ Ty})}", from=6-3, to=4-4]
          \arrow[Rightarrow, no head, from=6-3, to=6-2]
          \arrow[""{name=9, anchor=center, inner sep=0}, "{\raisebox{0.5em}{\((\mathrm{id}\circ\eta_{Tc}\circ\eta_{Tx})\bullet(\mathrm{id}\circ\eta_{Td}\circ\eta_{Ty})\)}}", from=6-3, to=6-4]
          \arrow[""{name=10, anchor=center, inner sep=0}, "{(\eta_{Ta}\circ\eta_{Tc}\circ \mathrm{id})\bullet(\eta_{Tb}\circ\eta_{Td}\circ \mathrm{id})}", from=6-3, to=8-1]
          \arrow[""{name=11, anchor=center, inner sep=0}, Rightarrow, no head, from=6-3, to=8-4]
          \arrow["{{{(Ta \circ \mu_c \circ \mu_x) \bullet (Tb \circ \mu_d \circ \mu_y)}}}", from=6-4, to=8-4]
          \arrow[""{name=12, anchor=center, inner sep=0}, "{{{(\mu_a \circ \mu_c \circ Tx)\bullet(\mu_b \circ \mu_d \circ Ty)}}}"', from=8-1, to=8-4]
          \arrow["\circ\ \mathsf{functor}"{description}, draw=none, from=0, to=3]
          \arrow["\circ\ \mathsf{functor}"{description}, draw=none, from=1, to=5]
          \arrow["\circ\ \mathsf{functor}"{description}, draw=none, from=2, to=3-3]
          \arrow["\circ\ \mathsf{functor}"{description}, draw=none, from=3, to=4]
          \arrow["\circ\ \mathsf{functor}"{description}, draw=none, from=4, to=5-1]
          \arrow["{\mathsf{nat\ }\xi}", draw=none, from=3-3, to=8]
          \arrow["{\mathsf{nat\ }\xi}"{description}, draw=none, from=4-2, to=7]
          \arrow["{T\mathsf{\ \circ\text{-}bimonad}}"{description}, draw=none, from=6, to=10]
          \arrow["{T\mathsf{\ \circ\text{-}bimonad}}"{description}, draw=none, from=9, to=4-4]
          \arrow["{T\mathsf{\ monad}}"{description}, draw=none, from=6-3, to=12]
          \arrow["{T\mathsf{\ monad}}"{description}, draw=none, from=6-4, to=11]
        \end{tikzcd}}
    \]
    \caption{The R-matrix satisfies \cref{eq:r-matrix-1}.}%
    \label{fig:duoidal-structure-r-matrix-2}
  \end{amssidewaysfigure}

  It is left to show the commutativity of \cref{eq:r-matrix-unitality1,eq:r-matrix-unitality2}.
  For example, the first diagram in the former follows from the commutativity of
  \[
    \begin{tikzcd}[ampersand replacement=\&,cramped]
      {\bot \circ (a \bullet b)} \& {(\bot \bullet \bot)\circ (a\bullet b)} \& {(T\bot \bullet T\bot)\circ(Ta\bullet Tb)} \\
      {a\bullet b} \& {(\bot \circ a)\bullet(\bot \circ b)} \\
      {(\bot \circ a)\bullet(\bot \circ b)} \&\& {(T\bot \circ Ta)\bullet(T\bot \circ Tb)}
      \arrow[""{name=0, anchor=center, inner sep=0}, "{\nu \circ \mathrm{id}}", from=1-1, to=1-2]
      \arrow["\lambda"', from=1-1, to=2-1]
      \arrow[""{name=1, anchor=center, inner sep=0}, "{\raisebox{0.5em}{\((\eta_\bot\bullet\eta_\bot)\circ(\eta_a\bullet\eta_b)\)}}", from=1-2, to=1-3]
      \arrow["{\xi_{\bot,\bot,a,b}}"', from=1-2, to=2-2]
      \arrow["{\xi_{T\bot,T\bot,Ta,Tb}}", from=1-3, to=3-3]
      \arrow["{\lambda^{-1}\bullet\lambda^{-1}}"', from=2-1, to=3-1]
      \arrow[""{name=2, anchor=center, inner sep=0}, Rightarrow, no head, from=2-2, to=3-1]
      \arrow[""{name=3, anchor=center, inner sep=0}, "{(\eta_\bot\circ\eta_a)\bullet(\eta_\bot\circ\eta_b)}"{description}, from=2-2, to=3-3]
      \arrow[""{name=4, anchor=center, inner sep=0}, "{(T_0^\circ \circ \alpha)\bullet(T^\circ_0 \circ \beta)}", from=3-3, to=3-1]
      \arrow["\eqref{eq:duoidal-cat-unitality}"{description}, draw=none, from=0, to=2]
      \arrow["{\mathsf{nat\ }\xi}"{description}, draw=none, from=1, to=3]
      \arrow["{T\ \circ\text{-}\mathsf{bimonad}}"{description}, draw=none, from=2-2, to=4]
    \end{tikzcd}
  \]
  and the other diagrams are similar.
\end{proof}

\begin{proof}[Proof of \cref{thm:r-matrices-iff-duoidal-structure}]
  Combining \cref{prop:r-matrices-preduoidal-structure,prop:duoidal-structure-r-matrix},
  it is left to prove that the constructions are mutually inverse.
  \begin{align*}
    (&(\alpha \circ \gamma) \bullet (\beta \circ \delta)) \xi_{Ta,Tb,Tc,Td} ((\eta_a \bullet \eta_b) \circ (\eta_c \bullet \eta_d)) \\
     &= ((\alpha \circ \gamma) \bullet (\beta \circ \delta)) ((\eta_a \circ \eta_c) \bullet (\eta_b \circ \eta_d)) \xi_{a,b,c,d} &\text{by naturality of } \xi \\
     &= ((\alpha\eta_a \circ \gamma\eta_c) \bullet (\beta\eta_b \circ \delta\eta_d)) \xi_{a,b,c,d} &\text{by functoriality of } \bullet \text{ and } \circ \\
     &= \xi_{a,b,c,d} &\text{by monadicity of } T; \\\\
     (&(\mu_a \circ \mu_c) \bullet (\mu_b \circ \mu_d)) R_{Ta,Tb,Tc,Td} ((\eta_a \bullet \eta_b) \circ (\eta_c \bullet \eta_d)) \\
     &= ((\mu_a \circ \mu_c) \bullet (\mu_b \circ \mu_d)) ((\eta_{Ta} \circ \eta_{Tc}) \bullet (\eta_{Tb} \circ \eta_{Td})) R_{a,b,c,d} \\
     &= R_{a,b,c,d}.&&\qedhere
  \end{align*}
\end{proof}

\section{Linearly distributive monads}\label{sec:lin-dist-bimonads}

\noindent Normal duoidal categories,
see \cref{def:normal-duoidal-category},
have connections to linear logic:
in~\cite[7]{garner16:commut} it is shown that
every normal duoidal category \(\cat{D}\)
has the structure of a \emph{linearly distributive category};
see~\cite{cockett97:weakl}.
In that case, the linear distributors are given by
\begin{equation}\label{eq:normal-duoidal-to-linear-dist}
  \begin{aligned}
    \partial^\ell_\ell \from a \circ (b \bullet c) &\cong (a \bullet 1) \circ (b \bullet c) \xrightarrow{\;\; \zeta \;\;} (a \circ b) \bullet (1 \bullet c) \cong (a \circ b) \bullet c, \\
    \partial^\ell_r \from a \circ (b \bullet c) &\cong (1 \bullet a) \circ (b \bullet c) \xrightarrow{\;\; \zeta \;\;} (1 \circ b) \bullet (a \circ c) \cong b \bullet (a \circ c), \\
    \partial^r_{\ell} \from (b \bullet c) \circ a &\cong (b \bullet c) \circ (a \bullet 1) \xrightarrow{\;\; \zeta \;\;} (b \circ a) \bullet (c \circ 1) \cong (b \circ a) \bullet c. \\
    \partial^r_r \from (b \bullet c) \circ a &\cong (b \bullet c) \circ (1 \bullet a) \xrightarrow{\;\; \zeta \;\;} (b \circ 1) \bullet (c \circ a) \cong b \bullet (c \circ a).
  \end{aligned}
\end{equation}

Since by~\cite[Theorem~5.18]{malkiewich22:coher},
normal duoidal categories satisfy a much stronger form of coherence,
structures on them require fewer axioms to be fully specified.
For example,
if \(T\) is a double opmonoidal monad on a normal duoidal category \(\cat{D}\),
then the following diagram commutes:
\begin{equation}\label{eq:normal-B0-conjugate}
  \begin{tikzcd}[ampersand replacement=\&,cramped,row sep=small]
    1 \&\& T1 \&\& 1 \\
    \\
    \bot \&\& {T\bot} \&\& \bot
    \arrow["{{\eta_1}}"', from=1-1, to=1-3]
    \arrow[""{name=0, anchor=center, inner sep=0}, curve={height=-28pt}, equals, from=1-1, to=1-5]
    \arrow[""{name=1, anchor=center, inner sep=0}, "\cong"', from=1-1, to=3-1]
    \arrow["{{T^\bullet_0}}"', from=1-3, to=1-5]
    \arrow[""{name=2, anchor=center, inner sep=0}, "{T(\cong)}", from=1-3, to=3-3]
    \arrow[""{name=3, anchor=center, inner sep=0}, "\cong", from=1-5, to=3-5]
    \arrow["{{\eta_\bot}}", from=3-1, to=3-3]
    \arrow[""{name=4, anchor=center, inner sep=0}, curve={height=28pt}, equals, from=3-1, to=3-5]
    \arrow["{{T^\circ_0}}", from=3-3, to=3-5]
    \arrow["{\mathsf{bimonad}}"{description, pos=0.7}, draw=none, from=0, to=1-3]
    \arrow["{\mathsf{nat}\ \eta}"{description}, draw=none, from=1, to=2]
    \arrow["\eqref{eq:pi-nu-morphisms-of-algebras}"{description}, draw=none, from=2, to=3]
    \arrow["{\mathsf{bimonad}}"{description, pos=0.2}, draw=none, from=3-3, to=4]
  \end{tikzcd}
\end{equation}
In particular, \(T_0^{\bullet}\) and \(T_0^{\circ}\) are conjugate by isomorphisms:
\[
  (T1 \xrightarrow{\;\;T_0^{\bullet}\;\;} 1)
  g= (T1 \xrightarrow{\;T(\cong)\;} T\bot \xrightarrow{\;\;T_0^{\circ}\;\;} \bot \xrightarrow{\;\cong^{-1}\;} 1).
\]

Further, in the above setting, \cref{eq:pi-nu-morphisms-of-algebras} automatically holds.
For simplicity, assume \(\cat{D}\) to be strict, and write \(T_0 \defeq T_0^{\bullet} = T_0^{\circ}\).
Then, for example, we have
\begin{equation}\label{eq:cocomm-trialg-unit-automatic}
  \begin{tikzcd}[ampersand replacement=\&]
    {T(1 \circ 1)} \&\& T1 \\
    {T1 \circ T1} \& T1 \\
    {1 \circ 1} \&\& 1
    \arrow[""{name=0, anchor=center, inner sep=0}, "{{T \varpi}}", curve={height=-18pt}, from=1-1, to=1-3]
    \arrow["{{T_{2,1,1}^\circ}}"', from=1-1, to=2-1]
    \arrow[""{name=1, anchor=center, inner sep=0}, "\varpi"', curve={height=18pt}, from=3-1, to=3-3]
    \arrow["{{T_0}}", from=1-3, to=3-3]
    \arrow["{{T_0 \circ T_0}}"', from=2-1, to=3-1]
    \arrow[""{name=2, anchor=center, inner sep=0}, "{T(\cong)}"', from=1-1, to=1-3]
    \arrow[""{name=3, anchor=center, inner sep=0}, "\cong", from=3-1, to=3-3]
    \arrow["{T_0\circ \mathrm{id}}"', from=2-1, to=2-2]
    \arrow[Rightarrow, no head, from=1-3, to=2-2]
    \arrow["{T_0}", from=2-2, to=3-3]
    \arrow["{\mathsf{coherence}}"{description}, draw=none, from=0, to=2]
    \arrow["{\mathsf{coherence}}"{description}, draw=none, from=3, to=1]
  \end{tikzcd}
\end{equation}
The other diagram is similar.

\begin{remark}\label{rmk:non-planar-lcds}
  Sometimes, one considers only so-called \emph{non-planar} linearly distributive categories,
  see~\cite[Section~2.1]{cockett97:weakl}.
  These are categories in which only \(\partial_{\ell}^{\ell}\) and \(\partial_r^r\) of \cref{eq:normal-duoidal-to-linear-dist} exist.
  What we call a linearly distributive category is referred to as a \emph{planar} linearly distributive category in \emph{ibid}.
\end{remark}

Conditions for a comonad to lift the (non-planar) linear distributive structure of its base category to its category of coalgebras were defined in~\cite{pastro12:note}.
For the convenience of the reader, the next proposition expresses this relation in terms of monads.

\begin{proposition}[{\cite[Proposition~2.1]{pastro12:note}}]\label{prop:linearly-distributive-monad}
  Let \((\cat{L}, \otimes, \odot)\) be a non-planar linearly distributive category,
  and suppose that the monad \(T\) on \(\cat{L}\) is separately opmonoidal.
  If the diagrams
  \begin{equation}\label[diagram]{eq:linearly-distributive-monad-1}
    \begin{tikzcd}[ampersand replacement=\&]
	{T(a \otimes (b \odot c))} \& {Ta \otimes T(b \odot c)} \& {Ta \otimes (Tb \odot Tc)} \\
	{T((a \otimes b) \odot c)} \& {T(a \otimes b) \odot Tc} \& {(Ta \otimes Tb) \odot Tc}
	\arrow["{T\partial_l}"', from=1-1, to=2-1]
	\arrow["{T^\otimes_{2, a, b\odot c}}", from=1-1, to=1-2]
	\arrow["{Ta \otimes T^\odot_{2, b,c}}", from=1-2, to=1-3]
	\arrow["{T^\odot_{2, a\otimes b, c}}"', from=2-1, to=2-2]
	\arrow["{T^\otimes_{2,a,b} \odot Tc}"', from=2-2, to=2-3]
	\arrow["{\partial_l}", from=1-3, to=2-3]
      \end{tikzcd}
  \end{equation}
  \begin{equation}\label[diagram]{eq:linearly-distributive-monad-2}
      \begin{tikzcd}[ampersand replacement=\&]
	{T((b \odot c)\otimes a)} \& {T(b \odot c) \otimes Ta} \& {(Tb \odot Tc) \otimes Ta} \\
	{T(b \odot (c \otimes a))} \& {Tb \odot T(c \otimes a)} \& {Tb \odot (Tc \otimes Ta)}
	\arrow["{T\partial_r}"', from=1-1, to=2-1]
	\arrow["{T^\otimes_{2,b\odot c, a}}", from=1-1, to=1-2]
	\arrow["{T^\odot_{2,b,c}\otimes Ta}", from=1-2, to=1-3]
	\arrow["{T^\odot_{2,b,c\otimes a}}"', from=2-1, to=2-2]
	\arrow["{Tb \odot T^\otimes_{2,c,a}}"', from=2-2, to=2-3]
	\arrow["{\partial_r}", from=1-3, to=2-3]
      \end{tikzcd}
  \end{equation}
  commute for all \(T\)-algebras \(a\), \(b\), and \(c\), then \(\cat{L}^T\) is non-planar linearly distributive.
\end{proposition}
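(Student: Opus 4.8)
The plan is to transport the non-planar linearly distributive structure of \(\cat{L}\) to \(\cat{L}^T\) along the forgetful functor \(U^T\from\cat{L}^T\to\cat{L}\), using that the latter is faithful. Since \(T\) is separately opmonoidal, it is in particular a bimonad over \((\cat{L},\otimes)\) and a bimonad over \((\cat{L},\odot)\), so by \cref{prop: Moerdijk_reconstruction} the category \(\cat{L}^T\) carries two monoidal structures: the tensor products \((a,\alpha)\otimes(b,\beta)\) and \((a,\alpha)\odot(b,\beta)\) are supported on \(a\otimes b\) and \(a\odot b\) with actions \((\alpha\otimes\beta)\,T^\otimes_{2,a,b}\) and \((\alpha\odot\beta)\,T^\odot_{2,a,b}\), and the units are the respective monoidal units of \(\cat{L}\) equipped with the opmonoidal counits \(T^\otimes_0\) and \(T^\odot_0\); with respect to each of these \(U^T\) is strict monoidal. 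Thus \(\cat{L}^T\) is preduoidal in the sense of \cref{def:preduoidal}, and it remains to lift the two distributors and to check the non-planar linear distributivity axioms.

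The key step is to show that, for all \(T\)-algebras \((a,\alpha)\), \((b,\beta)\), \((c,\gamma)\), the components \(\partial_l\from a\otimes(b\odot c)\to(a\otimes b)\odot c\) and \(\partial_r\from(b\odot c)\otimes a\to b\odot(c\otimes a)\) are morphisms of \(T\)-algebras for the structures just described. I would prove this for \(\partial_l\) by precomposing it with the action of the source algebra \((a,\alpha)\otimes\bigl((b,\beta)\odot(c,\gamma)\bigr)\) and pushing the three actions \(\alpha\), \(\beta\), \(\gamma\) past \(\partial_l\) using its naturality together with functoriality of \(\otimes\) and \(\odot\); what remains after this rewrite is exactly the upper-right leg of \cref{eq:linearly-distributive-monad-1}, which by hypothesis equals its lower-left leg, and regrouping the actions on the other side yields precisely the action of the target algebra \(\bigl((a,\alpha)\otimes(b,\beta)\bigr)\odot(c,\gamma)\) postcomposed with \(T\partial_l\). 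The same argument applied to \cref{eq:linearly-distributive-monad-2} handles \(\partial_r\). Naturality of the two lifted families in \(\cat{L}^T\) is then automatic: \(U^T\) is faithful and sends their naturality squares to the commuting naturality squares of \(\partial_l\), \(\partial_r\) in \(\cat{L}\).

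Finally I would note that every coherence axiom of a non-planar linearly distributive category (see \cref{rmk:non-planar-lcds}) is an equality of composites built only from the distributors \(\partial_l\), \(\partial_r\) and the associativity and unit constraints of the two monoidal structures; under \(U^T\) each such composite in \(\cat{L}^T\) maps to the corresponding composite in \(\cat{L}\) — the constraints because \(U^T\) is strict monoidal for both \(\otimes\) and \(\odot\), the distributors by the previous step — so the axiom, holding in \(\cat{L}\), holds in \(\cat{L}^T\) by faithfulness of \(U^T\); hence \(\cat{L}^T\) is non-planar linearly distributive. I expect the only genuinely fiddly point to be the bookkeeping in the second paragraph, namely verifying that the two hypothesis diagrams — phrased purely in terms of the opmonoidal structure maps and the distributors — really do encode the lifting condition for \(\partial_l\) and \(\partial_r\); once the explicit form of the tensor products on \(\cat{L}^T\) coming from \cref{prop: Moerdijk_reconstruction} is in hand, this is a routine if somewhat lengthy diagram chase.
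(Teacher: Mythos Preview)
The paper does not supply its own proof of this proposition: it is stated as a citation of \cite[Proposition~2.1]{pastro12:note}, merely rephrased for monads instead of comonads, and is immediately followed by an example rather than a proof environment. Your argument is correct and is exactly the standard one for results of this shape: use \cref{prop: Moerdijk_reconstruction} twice to equip \(\cat{L}^T\) with the two monoidal structures, observe that the hypothesis \cref{eq:linearly-distributive-monad-1,eq:linearly-distributive-monad-2} combined with naturality of \(\partial_l,\partial_r\) is precisely the \(T\)-equivariance of the distributors, and then reflect naturality and all the linear distributivity coherence axioms along the faithful, strict-monoidal forgetful functor \(U^T\). There is nothing further to compare.
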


\begin{example}\label{ex:linear-dist-bimonad-in-monoidal-setting}
  Every monoidal category \(\cat{C}\) is a linearly distributive category,
  setting \(\otimes = \odot\).
  The linear distributors are the associator (and its inverse) of \(\cat{C}\).
  A bimonad \((B, B_2, B_0)\) on \((\cat{C}, \otimes)\) therefore satisfies all assumptions of \cref{prop:linearly-distributive-monad}.
  \Cref{eq:linearly-distributive-monad-1,eq:linearly-distributive-monad-2}
  reduce to the coassociativity of \(B_2\).
\end{example}

However, lifting the interchange morphism of a normal duoidal category may be more involved than lifting only the non-planar linear distributors,
much like lifting the preduoidal structure is much easier than lifting the entire duoidal structure.

\begin{example}
  Let \(\cat{C}\) be a braided monoidal category,
  which is normal duoidal by \cref{ex:braided-cat-is-duoidal}.
  As such, the linear distributor \(\partial_\ell^{\ell}\) is the isomorphism
  \[
    \partial_{\ell}^{\ell} \from x \otimes (y \otimes z)
    \cong x \otimes (1 \otimes y) \otimes z
    \xrightarrow{x \otimes \sigma_{1, y} \otimes z} x \otimes (y \otimes 1) \otimes z
    \cong x \otimes (y \otimes z),
  \]
  and \(\partial^r_r\) is similar.
  By \cref{prop:linearly-distributive-monad},
  this structure lifts to the Eilenberg–Moore category of \(B \otimes \blank\),
  which is equal to the category of \(B\)-modules on \(\cat{C}\).
  Analogously to \cref{ex:linear-dist-bimonad-in-monoidal-setting},
  \cref{eq:linearly-distributive-monad-1,eq:linearly-distributive-monad-2}
  reduce to the coassociativity of \(\Delta\).

  However, it is not true that the modules over an arbitrary bialgebra are braided monoidal;
  see for example~\cite[Example~8.3.5]{Etingof2015}.
  In other words, the planar structure
  \begin{align*}
    \partial_r^{\ell} &\from x \otimes (y \otimes z)
             \cong 1 \otimes (x \otimes y) \otimes z
             \xrightarrow{1 \otimes \sigma_{x, y} \otimes z} 1 \otimes (y \otimes x) \otimes z
             \cong y \otimes (x \otimes z), \\
    \partial_\ell^r &\from (x \otimes y) \otimes z
           \cong x \otimes (y \otimes z) \otimes 1
           \xrightarrow{x \otimes \sigma_{y, z} \otimes 1} x \otimes (z \otimes y) \otimes 1
           \cong (x \otimes z) \otimes y,
  \end{align*}
  does not lift to \(B\)-modules.
\end{example}

As stated in the introduction,
planar duoidal categories also capture and generalise the notion of a braiding,
much like duoidal categories do—as such, we shall focus on this case from now on.
We begin with a straightforward reformulation of \cref{prop:linearly-distributive-monad}.

\begin{proposition}\label{prop:planar-linearly-distributive-monad}
  Let \((\cat{L}, \otimes, \odot)\) be a linearly distributive category
  with a separately opmonoidal monad \(T\) on it.
  If, in addition to \cref{eq:linearly-distributive-monad-1,eq:linearly-distributive-monad-2},
  the following diagrams
  commute for all \(T\)-algebras \(a\), \(b\), and \(c\):xs
  \begin{equation}\label[diagram]{eq:linearly-distributive-monad-3}
    \begin{tikzcd}[ampersand replacement=\&]
      {B(a \otimes (b \odot c))} \& {Ba \otimes B(b \odot c)} \& {Ba \otimes (Bb \odot Bc)} \\
      {B(b \odot (a \otimes c))} \& {Bb \odot B(a\otimes c)} \& {Bb \odot (Ba \otimes Bc)}
      \arrow["{B^\otimes_{2; a, b\odot c}}", from=1-1, to=1-2]
      \arrow["{B\partial_r^\ell}"', from=1-1, to=2-1]
      \arrow["{Ba \otimes B^\odot_{2; b, c}}", from=1-2, to=1-3]
      \arrow["{\partial_r^\ell}", from=1-3, to=2-3]
      \arrow["{B^\odot_{2; b, a\otimes c}}"', from=2-1, to=2-2]
      \arrow["{Bb \odot B^\otimes_{2;a,c}}"', from=2-2, to=2-3]
    \end{tikzcd}
  \end{equation}
  \begin{equation}\label[diagram]{eq:linearly-distributive-monad-4}
    \begin{tikzcd}[ampersand replacement=\&]
      {B((a \odot b)\otimes c)} \& {B(a \odot b) \otimes Bc} \& {(Ba \odot Bb) \otimes Bc} \\
      {B(a \odot (c \otimes b))} \& {Ba \odot B(c \otimes b)} \& {Ba \odot (Bc \otimes Bb)}
      \arrow["{B^\otimes_{2; a\odot b, c}}", from=1-1, to=1-2]
      \arrow["{B\partial_\ell^r}"', from=1-1, to=2-1]
      \arrow["{B^\odot_{2; a, b}\otimes Bc}", from=1-2, to=1-3]
      \arrow["{\partial_r^\ell}", from=1-3, to=2-3]
      \arrow["{B^\odot_{2; a, c\otimes b}}"', from=2-1, to=2-2]
      \arrow["{Ba \odot B^\otimes_{2;c,b}}"', from=2-2, to=2-3]
    \end{tikzcd}
  \end{equation}
  then \(\cat{L}^T\) is linearly distributive.
\end{proposition}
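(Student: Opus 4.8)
The plan is to mirror the proof of \cref{prop:linearly-distributive-monad}, reading \cref{eq:linearly-distributive-monad-3,eq:linearly-distributive-monad-4} as the lifting conditions for the two remaining linear distributors. First, since \(T\) is separately opmonoidal, two applications of \cref{prop: Moerdijk_reconstruction} show that both monoidal structures \((\otimes,\top)\) and \((\odot,\bot)\) lift to \(\cat{L}^T\), with the forgetful functor \(U^T\from\cat{L}^T\to\cat{L}\) strict monoidal for each. Concretely, for \(T\)-algebras \((a,\alpha)\), \((b,\beta)\), \((c,\gamma)\) the \(T\)-action on the lifted object \(a\otimes(b\odot c)\) is
\[
  T\bigl(a\otimes(b\odot c)\bigr)\xrightarrow{\ T^{\otimes}_{2}\ }Ta\otimes T(b\odot c)\xrightarrow{\ Ta\,\otimes\, T^{\odot}_{2}\ }Ta\otimes(Tb\odot Tc)\xrightarrow{\ \alpha\,\otimes\,(\beta\odot\gamma)\ }a\otimes(b\odot c),
\]
and analogously for every object built from \(\otimes\), \(\odot\), and the two units.

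Next I would verify that all four linear distributors of \(\cat{L}\) lift to \(\cat{L}^T\). For \(\partial^{\ell}_{\ell}\) and \(\partial^{r}_{r}\) this is exactly the conclusion of \cref{prop:linearly-distributive-monad}, which applies by hypothesis. For \(\partial^{r}_{\ell}\) and \(\partial^{\ell}_{r}\), one spells out what it means for, say, \(\partial^{r}_{\ell}\from a\otimes(b\odot c)\to b\odot(a\otimes c)\) to be a morphism of \(T\)-algebras between the lifted objects, and uses naturality of \(\partial^{r}_{\ell}\) in \(\cat{L}\) to slide the structure maps \(\alpha\), \(\beta\), \(\gamma\) across it. The identity that remains involves only the opmonoidal structure maps of \(T\), and it is precisely \cref{eq:linearly-distributive-monad-3} evaluated at the underlying objects of \(a\), \(b\), \(c\); in the same way \cref{eq:linearly-distributive-monad-4} witnesses the lifting of \(\partial^{\ell}_{r}\). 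This naturality bookkeeping is the only genuine computation in the argument, and the step in which the hypotheses are consumed; I expect it to be routine once the squares are drawn.

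Finally, it remains to check that the lifted units, unitors, associators, and the four linear distributors satisfy the coherence axioms of a linearly distributive category. Here I would invoke the usual reconstruction argument: \(U^T\) is faithful, strict monoidal for both tensor products, and by construction carries each ingredient of the lifted structure on \(\cat{L}^T\) to the corresponding ingredient of \(\cat{L}\). Hence any coherence diagram in \(\cat{L}^T\) is sent by \(U^T\) to the same diagram in \(\cat{L}\), which commutes because \(\cat{L}\) is linearly distributive, and faithfulness of \(U^T\) forces commutativity upstairs; therefore \(\cat{L}^T\) is linearly distributive. I do not anticipate a serious obstacle here: as the surrounding text notes, the statement is a straightforward reformulation of \cref{prop:linearly-distributive-monad}, the only new content being the identification of \cref{eq:linearly-distributive-monad-3,eq:linearly-distributive-monad-4} with lifting conditions for \(\partial^{r}_{\ell}\) and \(\partial^{\ell}_{r}\).
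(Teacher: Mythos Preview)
Your proposal is correct and is exactly the argument the paper has in mind: the paper gives no proof at all, introducing the proposition only as ``a straightforward reformulation of \cref{prop:linearly-distributive-monad}''. Your write-up spells out precisely that reformulation---lift the two monoidal structures via \cref{prop: Moerdijk_reconstruction}, use \cref{eq:linearly-distributive-monad-1,eq:linearly-distributive-monad-2,eq:linearly-distributive-monad-3,eq:linearly-distributive-monad-4} as the \(T\)-algebra--morphism conditions for the four distributors, and transport the coherence axioms along the faithful strict monoidal forgetful functor---so there is nothing to compare.
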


\begin{example}
  Let \(B \in \kVect\) be a bialgebra.
  Focusing on the planar linear distributor \(\partial_{r}^{\ell}\),
  for all \(b \in B\), \(x \in a\), \(y \in b\), and \(z \in c\),
  \cref{eq:linearly-distributive-monad-3} reduces to the equality
  \[
    b_{(1)} \otimes y \otimes b_{(2)} \otimes x \otimes b_{(3)} \otimes z
    =
    b_{(2)} \otimes y \otimes b_{(1)} \otimes x \otimes b_{(3)} \otimes z,
  \]
  which is easily seen to be equivalent to \(b_{(2)} \otimes b_{(1)} = b_{(1)} \otimes b_{(2)}\).
\end{example}

Thus, linearly distributive monads seem to be connected to the double opmonoidal monads of \cref{sec:duoidal-bimonads}.

\begin{proposition}\label{prop:cocommutative-trimonads-are-pastro-monads}
  Let \((\cat{D}, \bullet, \circ, 1)\) be a normal duoidal category.
  Then double opmonoidal monads on \(\cat{D}\)
  are linear distributive bimonads on \(\cat{D}\).
\end{proposition}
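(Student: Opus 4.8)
The plan is to unwind both sides of the asserted implication and match them up. Since $(\cat{D},\bullet,\circ,1)$ is normal duoidal it carries the linearly distributive structure of \cref{eq:normal-duoidal-to-linear-dist}, with the identification $\otimes = \circ$ and $\odot = \bullet$; and a double opmonoidal monad is, by \cref{def:duoidal-bimonad} and \cref{def:preduoidal}, in particular separately opmonoidal, since it comes equipped with bimonad structures over $\duoidalw$ and $\duoidalb$. Hence the entire content of the proposition is the commutativity of the four coherence diagrams \cref{eq:linearly-distributive-monad-1,eq:linearly-distributive-monad-2,eq:linearly-distributive-monad-3,eq:linearly-distributive-monad-4} — that is, compatibility of each of the linear distributors $\partial^\ell_\ell$, $\partial^r_r$, $\partial^\ell_r$, $\partial^r_\ell$ with the two opmonoidal structures of $T$.

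The main step is a pasting argument, using the ingredients already assembled in the excerpt. By \cref{eq:normal-B0-conjugate} the structure maps $T^\bullet_0$ and $T^\circ_0$ are conjugate under the isomorphism $\bot \cong 1$, by \cref{eq:pi-nu-morphisms-of-algebras} — which, as noted above, holds automatically here — the opmonoidal structure maps are compatible with $\nu$, $\varpi$, and $\iota$, and the bimonad axioms for the two opmonoidal structures say that $T^\circ_2$ and $T^\bullet_2$ are compatible with the respective unitors. Now each distributor in \cref{eq:normal-duoidal-to-linear-dist} is, by construction, a single instance of $\zeta$ pre- and post-composed with unit isomorphisms (the $\bullet$- and $\circ$-unitors together with $\iota^{\pm1}$, invertible by normality). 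Plugging such an expression into, say, \cref{eq:linearly-distributive-monad-1}, the outer square decomposes into one copy of \cref{eq:cocommutative-duoidal-bimonad} — which is exactly the double-opmonoidal compatibility with the interchange law — together with naturality squares for $T^\circ_2$, $T^\bullet_2$, and the unitors, and a few unit-coherence squares fed by \cref{eq:normal-B0-conjugate} and \cref{eq:pi-nu-morphisms-of-algebras}; every building block commutes, so the outer square does too. The same recipe, with the appropriate handedness of $\zeta$ and of the unitors, gives \cref{eq:linearly-distributive-monad-2,eq:linearly-distributive-monad-3,eq:linearly-distributive-monad-4}, and throughout one may suppress the structural isomorphisms using the strengthened coherence result for normal duoidal categories, \cite[Theorem~5.18]{malkiewich22:coher}. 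More conceptually, \cref{prop:cocommutative-bimonad-lifts-duoidal-structure} already tells us that $\zeta$ and $\nu,\varpi,\iota$ lift to $\cat{D}^T$, making it normal duoidal and hence linearly distributive with distributors realised by the strict monoidal forgetful functor; but extracting the diagrams \cref{eq:linearly-distributive-monad-1,eq:linearly-distributive-monad-2,eq:linearly-distributive-monad-3,eq:linearly-distributive-monad-4} from this still amounts to the pasting just described.

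The only real obstacle is the bookkeeping of the four cases: keeping track of which instance of $\zeta$, which left/right unitors, and which occurrences of the isomorphism $\bot\cong 1$ appear in $\partial^\ell_\ell$ versus $\partial^r_r$ versus the two planar distributors $\partial^\ell_r$, $\partial^r_\ell$. The conceptual input is entirely \cref{eq:cocommutative-duoidal-bimonad}. Note that normality is used essentially and twice: to make $\iota$ invertible so that the planar distributors are defined at all, and to license the use of the stronger coherence theorem that tames the structural isomorphisms in the pasting.
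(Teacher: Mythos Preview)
Your proposal is correct and follows essentially the same approach as the paper: both reduce the claim to verifying the four compatibility diagrams for the distributors, express each distributor as an instance of $\zeta$ sandwiched between unit isomorphisms, and then produce a pasting whose central cell is \cref{eq:cocommutative-duoidal-bimonad}, with the remaining cells filled by naturality of $T^\circ_2$, $T^\bullet_2$, and $\zeta$, the bimonad counit axioms, and the unit compatibility encoded in \cref{eq:cocomm-trialg-unit-automatic} (equivalently your appeal to \cref{eq:normal-B0-conjugate} and \cref{eq:pi-nu-morphisms-of-algebras}). One small inaccuracy: normality is needed for \emph{all four} distributors in \cref{eq:normal-duoidal-to-linear-dist}, not just the planar ones, since each involves an isomorphism of the form $1 \circ c \cong c$ or $c \circ 1 \cong c$ that relies on $1 \cong \bot$.
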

\begin{proof}
  Let \(B\) be a double opmonoidal monad on \(\cat{D}\).
  Then the left-left linear distributor \(\partial^{\ell}_\ell\) is given by
  \[
    a \circ (b \bullet c) \cong (a \bullet 1) \circ (b \bullet c) \xrightarrow{\;\; \zeta \;\;} (a \circ b) \bullet (1 \circ c) \cong (a \circ b) \bullet c.
  \]
  Now, \cref{eq:linearly-distributive-monad-1} is satisfied by the commutativity of
  \cref{fig:cocommutative-trimonads-are-pastro-monads-1};
  \cref{eq:linearly-distributive-monad-2} is similar.
  \begin{figure}[htbp]
    \[
      \mathscale{0.8}{
        \begin{tikzcd}[ampersand replacement=\&]
          {B(a \circ (b \bullet c))} \& {Ba \circ B(b \bullet c)} \&\&\& {Ba \circ (Bb \bullet Bc)} \\
          {B((a \bullet 1) \circ (b\bullet c))} \& {B(a \bullet 1) \circ B(b \bullet c)} \& {(Ba \bullet B1)\circ (Bb \bullet Bc)} \&\& {(Ba \bullet 1)\circ (Bb \bullet Bc)} \\
          {B((a \circ b) \bullet (1 \circ c))} \& {B(a \circ b) \bullet B(1 \circ c)} \& {(Ba \circ Bb) \bullet (B1 \circ Bc)} \&\& {(Ba \circ Bb) \bullet (1 \circ Bc)} \\
          {B((a \circ b) \bullet (\bot \circ c))} \\
          \& {B(a \circ b) \bullet B(\bot \circ c)} \& {(Ba \circ Bb)\bullet (B\bot \circ Bc)} \&\& {(Ba \circ Bb) \bullet (\bot \circ Bc)} \\
          \\
          {B((a \circ b) \bullet c)} \& {B(a \circ b)      \bullet Bc} \&\&\& {(Ba \circ Bb) \bullet Bc}
          \arrow["{B(\cong)}"', from=1-1, to=2-1]
          \arrow["B\zeta_{a,1,b,c}"', from=2-1, to=3-1]
          \arrow[""{name=0, anchor=center, inner sep=0}, "{B^\bullet_{2,a\circ b, c}}"', from=7-1, to=7-2]
          \arrow[""{name=1, anchor=center, inner sep=0}, "{B^\circ_{2,a,b}\bullet Bc}"', from=7-2, to=7-5]
          \arrow[""{name=2, anchor=center, inner sep=0}, "{B^\circ_{2,a,b\bullet c}}", from=1-1, to=1-2]
          \arrow[""{name=3, anchor=center, inner sep=0}, "{Ba\circ B^\bullet_{2,b,c}}", from=1-2, to=1-5]
          \arrow["\cong", from=1-5, to=2-5]
          \arrow["\zeta_{Ba,1,Bb,Bc}", from=2-5, to=3-5]
          \arrow["{\zeta_{Ba, B1, Bb, Bc}}", from=2-3, to=3-3]
          \arrow[""{name=4, anchor=center, inner sep=0}, "{B^\circ_{2,a\bullet 1, b\bullet c}}", from=2-1, to=2-2]
          \arrow["{B^\bullet_{2,a,1}\circ B^\bullet_{2,b,c}}", from=2-2, to=2-3]
          \arrow[""{name=5, anchor=center, inner sep=0}, "{B^\bullet_{2,a\circ b, 1 \circ c}}"', from=3-1, to=3-2]
          \arrow["{B^\circ_{2,a,b} \bullet B^\circ_{2,1,c}}"', from=3-2, to=3-3]
          \arrow["\eqref{eq:cocommutative-duoidal-bimonad}"{description}, draw=none, from=2-2, to=3-2]
          \arrow[""{name=6, anchor=center, inner sep=0}, "{(Ba\bullet B^\bullet_0)\circ (Bb \bullet Bc)}", from=2-3, to=2-5]
          \arrow[""{name=7, anchor=center, inner sep=0}, "{(Ba \circ Bb) \bullet (B^\bullet_0 \circ Bc)}"', from=3-3, to=3-5]
          \arrow["{B(\cong) \circ B(b\bullet c)}", from=1-2, to=2-2]
          \arrow["{B(\cong)}"', from=4-1, to=7-1]
          \arrow["{B(a \circ b) \bullet B(\cong)}", from=3-2, to=5-2]
          \arrow["{B(a \circ b) \bullet B(\cong)}", from=5-2, to=7-2]
          \arrow["{B(\cong)}"', from=3-1, to=4-1]
          \arrow["\cong", curve={height=-12pt}, from=3-5, to=5-5]
          \arrow["\cong", from=5-5, to=7-5]
          \arrow["{B^\circ_{2,a,b} \bullet B^\circ_{2,\bot,c}}"', from=5-2, to=5-3]
          \arrow["{(Ba \circ Bb)\bullet(B^\circ_0 \circ Bc)}"', from=5-3, to=5-5]
          \arrow["{\cong^{-1}}", curve={height=-12pt}, from=5-5, to=3-5]
          \arrow["\eqref{eq:cocomm-trialg-unit-automatic}"{description}, draw=none, from=3-3, to=5-3]
          \arrow["{\mathsf{nat}\ B_2^{\circ}}"{description}, draw=none, from=2, to=4]
          \arrow["{\mathsf{nat}\ B_2^{\bullet}}"{description}, draw=none, from=5, to=0]
          \arrow["{(B, B^\bullet_2, B^\bullet_0)\ \mathsf{bimonad}}"{description}, draw=none, from=3, to=2-3]
          \arrow["{\mathsf{nat}\ \zeta}"{description}, draw=none, from=6, to=7]
          \arrow["{(B, B^\circ_2, B^\circ_0)\ \mathsf{bimonad}}"{description}, draw=none, from=5-3, to=1]
        \end{tikzcd}
      }
    \]
    \caption{The left-left linear distributor satisfies \cref{eq:linearly-distributive-monad-1}.}%
    \label{fig:cocommutative-trimonads-are-pastro-monads-1}
  \end{figure}

  \Cref{eq:linearly-distributive-monad-3} is satisfied by
  \cref{fig:cocommutative-trimonads-are-pastro-monads-2}%
  ---where we have assumed the normal duoidal structure to be strict for ease of readability---%
  and \cref{eq:linearly-distributive-monad-4} follows similarly.
  \begin{figure}[htbp]
    \[
      \mathscale{0.87}{\begin{tikzcd}[ampersand replacement=\&]
          {B(a \circ (b \bullet c))} \& {Ba \circ B(b \bullet c)} \&\& {Ba \circ (Bb \bullet Bc)} \\
          {B((1 \bullet a) \circ (b \bullet c))} \& {B(1 \bullet a) \circ B(b \bullet c)} \& {(B1 \bullet Ba) \circ (Bb \bullet Bc)} \& {(1 \bullet Ba) \circ (Bb \bullet Bc)} \\
          {B((1 \circ b) \bullet (a \circ c))} \& {B(1\circ b) \bullet B(a\circ c)} \& {(B1 \circ Bb) \bullet (Ba \circ Bc)} \& {(1 \circ Bb) \bullet (Ba \circ Bc)} \\
          {B(b \bullet (a \circ c))} \& {Bb \bullet B(a\circ c)} \&\& {Bb \bullet (Ba \circ Bc)}
          \arrow["{{B^\circ_{2; a, b\bullet c}}}", from=1-1, to=1-2]
          \arrow[Rightarrow, no head, from=1-1, to=2-1]
          \arrow[""{name=0, anchor=center, inner sep=0}, "{{\mathrm{id} \circ B^\bullet_{2; b, c}}}", from=1-2, to=1-4]
          \arrow[Rightarrow, no head, from=1-2, to=2-2]
          \arrow[Rightarrow, no head, from=1-4, to=2-4]
          \arrow["{B^\circ_{2;1\bullet a,b\bullet c}}", from=2-1, to=2-2]
          \arrow["{B\xi_{1,a,b,c}}"', from=2-1, to=3-1]
          \arrow["{{B^\bullet_{2;1,a} \circ B^\bullet_{2; b, c}}}", from=2-2, to=2-3]
          \arrow["{\eqref{eq:cocommutative-duoidal-bimonad}}"{description}, draw=none, from=2-2, to=3-2]
          \arrow["{(B_0 \bullet \mathrm{id})\circ\mathrm{id}}", from=2-3, to=2-4]
          \arrow[""{name=1, anchor=center, inner sep=0}, "{\xi_{B1,Ba,Bb,Bc}}"', from=2-3, to=3-3]
          \arrow[""{name=2, anchor=center, inner sep=0}, "{\xi_{1,Ba,Bb,Bc}}", from=2-4, to=3-4]
          \arrow["{{B^\bullet_{2; 1\circ b, a\circ c}}}"', from=3-1, to=3-2]
          \arrow["{{B^\circ_{2;1,b} \bullet B^\circ_{2;a,c}}}"', from=3-2, to=3-3]
          \arrow["{B\ \mathsf{bimonad}}"{description}, draw=none, from=3-2, to=4-2]
          \arrow["{(B_0 \circ \mathrm{id})\bullet\mathrm{id}}"{description}, from=3-3, to=4-4]
          \arrow[Rightarrow, no head, from=4-1, to=3-1]
          \arrow["{{B^\bullet_{2; b, a\circ c}}}"', from=4-1, to=4-2]
          \arrow["{{\mathrm{id} \bullet B^\circ_{2;a,c}}}"', from=4-2, to=4-4]
          \arrow[Rightarrow, no head, from=4-4, to=3-4]
          \arrow["{B\ \mathsf{bimonad}}"{description}, draw=none, from=0, to=2-3]
          \arrow["{\mathsf{nat\ }\xi}"', draw=none, from=1, to=2]
        \end{tikzcd}}
    \]
    \caption{The right-left linear distributor satisfies \cref{eq:linearly-distributive-monad-3}.}%
    \label{fig:cocommutative-trimonads-are-pastro-monads-2}
  \end{figure}
\end{proof}

\bibliographystyle{alpha}
\bibliography{main}

\end{document}